\newtheorem{thm}{Theorem}[section]
\newtheorem{lem}[thm]{Lemma}
\newtheorem{prop}[thm]{Proposition}
\newtheorem{coro}[thm]{Corollary}
\newtheorem*{theoremquote}{Theorem}
\theoremstyle{definition}
\newtheorem{defn}[thm]{Definition}
\newtheorem{prop-defn}[thm]{Proposition-Definition}
\newtheorem{exmp}[thm]{Example}
\newtheorem*{defnquote}{Definition}
\newtheorem*{prop-defn-quote}{Proposition-Definition}
\theoremstyle{remark}
\newtheorem{remark}[thm]{Remark}
\numberwithin{equation}{section}
\begin{document}

\title{Generic states and stability}
\date{\today}
\author{Donghoon Hyeon and Junyoung Park}

\address[DH]{Department of Mathematical Sciences, Seoul National University, Seoul, R. O. Korea \\Tel: +82-2-880-2666, Fax: +82-2-887-4694 }
\email{dhyeon@snu.ac.kr}
\address[JP]{ Department of Mathematics, POSTECH, Pohang, Gyungbuk, R. O. Korea}
\email{newshake@postech.ac.kr}

\keywords{Geometric Invariant Theory \and state polytope }
\subjclass[2010]{14L24}

\def\til{\widetilde}
\textbf{}
\def\ten{\otimes}
\def\ex{\times}

\def\ul{\underline}
\def\ol{\overline}

\def\a{\alpha}
\def\b{\beta}
\def\d{\delta}
\def\D{\Delta}
\def\e{\epsilon}
\def\g{\gamma}
\def\Gm{\Gamma}
\def\G{\Gamma}
\def\D{\Delta}
\def\h{\eta}

\def\la{\lambda}
\def\m{\mu}
\def\n{\nu}
\def\sm{\sigma}
\def\Sm{\Sigma}
\def\o{\omega}
\def\Om{\Omega}
\def\p{\phi}
\def\tt{\theta}
\def\w{\wedge}
\def\W{\bigwedge}
\def\vphi{\varphi}
\def\vp{\varphi}
\def\A{\mbb A}
\def\C{\mbb C}
\def\bR{\mbb R}
\def\bP{\mbb P}
\def\bZ{\mbb Z}
\def\bN{\mbb N}
\def\bQ{\mbb Q}
\def\t{\tau}
\def\cZ{\mathcal Z}
\def\cJ{\mathcal J}
\def\cI{\mathcal I}
\def\SL{{\mathrm {SL}}}
\def\GL{{\mathrm {GL}}}
\def\PSL{{\mathrm {PSL}}}

\def\bE{\mathbb E}
\def\bP{\mathbb P}
\def\bG{\mathbb G}
\def\bC{\mathbb C}
\def\bZ{\mathbb Z}
\def\bR{\mathbb R}
\def\inj{\hookrightarrow}
\def\surj{\twoheadrightarrow}
\def\dra{\dashrightarrow}


\def\cO{\mathcal O}
\def\deg{\textup{deg} \, }
\def\rk{\textup{rank} \, }
\def\lra{\longrightarrow}
\def\lla{\longleftarrow}
\def\rd{\partial}
\def\inv{^{-1}}
\def\isom{\simeq}
\def\der{\mathrm{Der\,}}
\def\spec{\mathrm{Spec \,}}
\def\proj{{\rm Proj \, }}
\def\ord{\mathrm{ord\,}}
\def\Proj{\rm{\textbf{Proj} \,}}
\def\aut{\textup{Aut}}

\def\mod{/ \! \! /}

\def\inj{\hookrightarrow}

\def\Pic{\mathrm{Pic}}
\def\PGL{\mathrm{PGL}}
\def\Sym{\mathrm{Sym}}
\def\Hom{\mathrm{Hom}}
\def\Gr{\textup{Gr}}

\def\cO{\mathcal O}
\def\ra{\rightarrow}


\def\bar{\overline}
\def\isomto{\stackrel{\sim}{\to}}
\def\codim{\textup{codim}\,}
\def\st{\Xi}
\def\stp{\mathcal{P}}
\def\gst{\mathcal{G}\Xi}
\def\gstp{\mathcal{GP}}
\def\conv{\mathrm{Conv} \,}
\def\gin{\mathrm{Gin}}
\def\weyl{\mathcal W}

\input xy
\xyoption{all}

\input epsf
\epsfxsize=2in

\def\bQ{\mathbb Q}
\def\tilde{\widetilde}
\def\hilb{\textup{Hilb}\,}

\def\Mg{\bar M_g}
\def\M{\bar M}
\def\PGL{\textup{PGL}}
\def\ii{{\bf in}\,}

\thanks{The first author was supported by the Research Resettlement Fund for the new faculty of Seoul National University, the SNU Invitation Program for Distinguished Scholar, and the following grants funded by the government of Korea:
NRF grant 2011-0030044 (SRC-GAIA) and NRF grant NRF-2013R1A1A2010649.}

\maketitle

\begin{abstract}  We define  the notion of the generic state polytope, analogous to the generic initial ideal and prove its existence: This greatly generalizes the work of R\"omer and Schmitz who proved the existence of generic Gr\"ober fans.  We also  show that a generic state polytope always contains the trivial character: Equivalently, in any GIT quotient problem of semisimple group representations, every point is semistable with respect to a {\it general} maximal torus. Also, we revisit Kempf's proof of the existence of the worst one parameter subgroup (1-ps) and describe the equations for determining the worst 1-ps.
\end{abstract}

\section{Introduction \& Preliminaries} Let $G$ be a reductive group over an algebraically closed field $k$ of characteristic zero and let $V$ be a rational representation of $G$.  A {\it state} of the $G$ action on $V$ is a function $\Xi$ that assigns to each torus $R$ of $G$ a subset $\Xi(R) \subset X(R)$ such that for any two tori $R_1 \subset R_2$, the image of $\Xi(R_2)$ under $X(R_2) \to X(R_1)$ is $\Xi(R_1)$ \cite{Kempf}.  The {\it state polytope} of $\Xi$ with respect to a torus $R$  is defined to be the convex hull in $X(R)_{\mathbb R}$ of $\Xi(R)$. A state of particular interest is the {\it Geometric Invariant Theory (GIT) state} of a point $v \in V$:
\[
\Xi_v(R)  := \{ \xi \in X(R) \, | \, v_\xi \ne 0\}.
\]
Here, $v = \sum v_\xi$ is the $R$-weight decomposition of $v$. For instance, if we take $G = \GL(V)$ acting naturally on $\bigwedge^{P(m)}S^mV^*$, we retrieve the state polytopes of the $m$th Hilbert points of graded ideals of the graded algebra $\oplus_{m\ge 0}S^mV^*$ with Hilbert polynomial $P$ (\cite{BM}, \cite{Sturmfels}). In this case, Bayer and Morrison proved that the state polytope can be computed via Gr\"obner basis as follows:
\begin{theoremquote}\cite[Theorem~3.1]{BM}\label{T:BM} \, For $m$ sufficiently large, there is a natural one-to-one correspondence between saturations of initial ideals and the vertices of the state polytope.
 \end{theoremquote}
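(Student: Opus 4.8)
The plan is to set up a dictionary between the face structure of the state polytope $\mathrm{State}_m(I) := \conv \Xi_{\mathrm{Hilb}_m(I)}(T)$, where $T \subset \GL(V)$ is the maximal torus of diagonal matrices and $\mathrm{Hilb}_m(I) \in \bigwedge^{P(m)} S^m V^*$ is the $m$-th Hilbert point, and the Gr\"obner fan of $I$. The starting point is the weight form of Gr\"obner theory: call $w \in \bR^{n+1}$ \emph{generic} (for $I$) if $\mathrm{in}_w(I)$ is a monomial ideal, and for such $w$ let $\mathfrak N_w$ be the set of degree-$m$ monomials \emph{not} lying in $\mathrm{in}_w(I)$. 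Since $\mathrm{in}_w(f) \in \mathrm{in}_w(I)_m$ for every $f \in I_m$, these standard monomials project to a basis of $S^m V^*/I_m$, so the Pl\"ucker coordinate of $\mathrm{Hilb}_m(I)$ indexed by $\mathfrak N_w$ is nonzero, and a short estimate with $w$-weights shows it is the unique nonzero coordinate whose weight is extremal in the $w$-direction (in GIT language, $\mu^{\cO(1)}(\mathrm{Hilb}_m(I),\lambda_w)$ is read off from $\mathrm{in}_w(I)_m$). Hence the $w$-extremal weight in $\Xi_{\mathrm{Hilb}_m(I)}(T)$ is $v_w := \mathrm{wt}(e_{\mathfrak N_w})$, the sum of the exponent vectors of the monomials in $\mathfrak N_w$. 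It follows that the vertices of $\mathrm{State}_m(I)$ are exactly the points $v_w$ for $w$ generic, each realized precisely by the generic $w$ in the interior of the (full-dimensional) normal cone of that vertex.

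Next I would pin down the meaning of ``$m$ sufficiently large'' through two inputs. First, for $m$ at least the Gotzmann number $G_P$ attached to $P$ and $n$, one has $\dim_k(S/I)_m = \dim_k(S/\mathrm{in}_w(I))_m = P(m)$, so $\Xi_{\mathrm{Hilb}_m(I)}(T)$ is genuinely indexed by subsets of degree-$m$ monomials of a fixed size and $\mathrm{Hilb}_m(I)$ lies in a fixed Grassmannian. Second, for such $m$ every monomial ideal $\mathrm{in}_w(I)$ is $m$-regular, so $\mathrm{in}_w(I)_{\ge m}$ is generated in degree $m$ and $\mathrm{in}_w(I)^{\mathrm{sat}}$ coincides with $\mathrm{in}_w(I)$ in all degrees $\ge m$; since a saturated ideal is recovered from its truncation in degrees $\ge m$, it follows that \emph{the single graded piece $\mathrm{in}_w(I)_m$, equivalently the standard-monomial set $\mathfrak N_w$, determines $\mathrm{in}_w(I)^{\mathrm{sat}}$}. (Even without appealing to Gotzmann's bound, a finite such $m_0(I)$ exists because the Gr\"obner fan is finite.)

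With these in hand, define on the finite set of saturated initial ideals of $I$ the map $\Psi(J) := \mathrm{wt}(e_{\mathfrak N})$, where $\mathfrak N$ is the set of degree-$m$ monomials not in $J$; this is the $w$-extremal vertex of $\mathrm{State}_m(I)$ for any generic $w$ with $\mathrm{in}_w(I)^{\mathrm{sat}} = J$ (as $J_m = \mathrm{in}_w(I)_m$ for $m$ past the satiety), so $\Psi$ lands among the vertices and depends only on $J$. Define $\Phi(v) := \mathrm{in}_w(I)^{\mathrm{sat}}$ for any generic $w$ in the interior of the normal cone $N_v$. Granting for the moment that $\Phi$ is well defined, $\Psi \circ \Phi = \mathrm{id}$ is immediate since $\Psi(\Phi(v))$ is the $w$-extremal vertex and $w \in \mathrm{int}\,N_v$ forces this to be $v$; and $\Phi \circ \Psi = \mathrm{id}$ holds because a generic $w$ realizing a given $J$ lies in the interior of the normal cone of $\Psi(J)$, genericity excluding ties between vertices. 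Thus the theorem reduces to the single assertion: \emph{if $w, w'$ are generic and both lie in $\mathrm{int}\,N_v$, then $\mathrm{in}_w(I)^{\mathrm{sat}} = \mathrm{in}_{w'}(I)^{\mathrm{sat}}$}, which by the previous paragraph is the same as $\mathfrak N_w = \mathfrak N_{w'}$.

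I expect this last assertion to be the main obstacle. Both $\mathfrak N_w$ and $\mathfrak N_{w'}$ have the same exponent sum $v$, so what must be shown is that two standard-monomial sets of the \emph{same} ideal $I$ with equal exponent sums must actually coincide once $m$ is large --- a statement that is simply false for arbitrary monomial sets (for instance $\{x_0^2, x_1^2, x_2^2\}$ and $\{x_0 x_1, x_0 x_2, x_1 x_2\}$ in degree $2$ have exponent sum $(2,2,2)$), so it genuinely exploits both that the sets come from a fixed $I$ and that $m \gg 0$. The route I would take is to invoke Bayer's analysis of the (restricted) Gr\"obner fan together with Gotzmann's persistence theorem to show that, for $m \ge m_0$, the normal fan of $\mathrm{State}_m(I)$ is independent of $m$ and coarsens the restricted Gr\"obner fan exactly so that each of its maximal cones is a union of maximal Gr\"obner cones all carrying one and the same saturated initial ideal; then the interior of any $N_v$ meets only Gr\"obner cones with a common saturated initial ideal, $\Phi$ is well defined, and the two compositions above complete the bijection --- with the independence of $m$ in the stable range falling out of the same analysis.
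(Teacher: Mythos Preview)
The paper does not prove this theorem at all: it is quoted in the introduction as background, with the citation \cite[Theorem~3.1]{BM}, and no argument for it is given anywhere in the text. There is therefore nothing in the paper to compare your proposal against.

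For what it is worth, your outline is a reasonable sketch of the Bayer--Morrison argument itself --- identifying vertices of the $m$th state polytope with $w$-extremal Pl\"ucker coordinates and hence with degree-$m$ pieces of monomial initial ideals is indeed the mechanism, and you correctly isolate the one nontrivial point (that two generic weight vectors in the interior of the same normal cone yield initial ideals with the same saturation), which is handled in \cite{BM} via the stabilization of the state polytope and its normal fan for $m$ beyond the regularity bound. But since the present paper simply cites this result without proof, there is no ``paper's own proof'' for you to match.
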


For the rest of this article, a state will always mean a GIT state, and the state polytope of $v$ with respect to $R$ will be conveniently denoted by $\stp_v(R)$. A GIT state is {\it bounded} in the sense that for every torus $R$, $\bigcup_{g\in G} g_{!}(\Xi_v(g^{-1}Rg))$ is a finite subset of $X(R)$, where $g_!$ is the isomorphism $X(g^{-1}Rg) \isomto X(R)$ given by the conjugation by $g$.
This is one of the key notions that leads to Kempf's proof (Theorem~2.2, ibid) of the existence of a worst one parameter subgroup conjectured by Mumford. A natural question arises: {\it Given algebraic group and an unstable point in a rational representation, how would one actually compute a worst 1-ps?}

Once we fix a maximal torus $R$, it can be relatively easily computed, as it corresponds to the nearest point of the state polytope (see Section~\ref{S:algorithm}, Theorem~\ref{T:nearest}, for a brief account of this fact which is well known in approximation theory). But to find the worst 1-ps or to determine whether the given point is semistable, we need to do this for every maximal tori. Indeed, the difficult part is to understand how the state polytopes vary according to the group action, and we try to do just that in this article.

Our first theorem in this direction is that the state polytope is invariant under the translation by a generic element of the group. This is similar to the notion of the {\it generic initial ideal}:
Recall, for instance from \cite[Section~1.59]{Eisenbud}, that for a given ideal $I \subset S=k[x_0, \dots, x_n]$ and a monomial order $\prec$, there exists a generic initial ideal $\gin_\prec(I)$ such that $\ii_\prec(g.I) = \gin_\prec(I)$ for any $g$ in an open set $U$ of $\GL_{n+1}(k)$ acting naturally on $S$. Our result is a direct generalization to state polytopes:

\begin{prop-defn-quote}\label{D:gst} Let $G$ be an algebraic group, $V$ be a rational representation of $G$, and $v \in V$.  There exists a nonempty open subscheme $U \subset G$ such that for every $g, g'\in U$, $\stp_{g.v}(R) = \stp_{g'.v}(R)$. The {\it (primary) generic state polytope} $\gstp_v(R)$ of $v$ with respect to $R$ is the state polytope $\stp_{g.v}(R)$ for some (and hence for any) $g \in U$.
\end{prop-defn-quote}
 The existence of such an open set $U$ in the above definition will be established in Proposition~\ref{P:gstv}.  As an immediate corollary,  we retrieve the existence of generic Gr\"ober fan proved in \cite[Theorem~3.1, Corollary~3.2]{Romer} by analyzing the behavior of universal Gr\"obner basis under linear coordinate changes.

 \

 In fact, we prove  more than just the existence of the open set $U$ and $\gstp_v(R)$.  In turn, there is an open set of each component  of the closed subscheme $G \setminus U$ on which the state polytope remains the same. Indeed, Proposition~\ref{P:gstv} asserts that there is a {\it finite} stratification $G = \coprod_{S \in I} U_S^v$  such that the state polytope $\stp_{g.v}(R)$ remains unchanged on each locally closed stratum $U_S^v \subset G$. This means that to find a worst 1-ps, one would compute the {\it finitely many} state polytopes corresponding to the finite stratification, find the nearest point of each state polytope thus computed, and compare them to find the farthest point among all of them (Section~\ref{S:algorithm}). That is, the state polytope that is farthest away from the origin gives the worst 1-PS.

\

From the discussion above, it is clear that the worst one parameter subgroups are contained in very special maximal tori. We turn our attention to a more general problem: When $G$ is reductive, a maximal torus $R$ of $G$ contains a  1-ps destabilizing $v$  if and only if $\stp_v(R)$ does not contain the trivial character ($=$ the origin). We know from experience that even in the worst GIT problems, for instance even when {\it every} point is unstable, destabilizing one-parameter subgroups, worst or not, are pretty special and a torus must be  chosen judiciously to destabilize a point. Especially, our experience is that every Hilbert point is stable with respect to \emph{generic} coordinates. To make this precise in terms of state polytopes, we make the following definition:

\begin{defnquote} Let $G$ be an algebraic group and $V$ be a rational representation of $G$. A point $v \in V$ is \emph{generically semistable (resp. stable)} if the primary generic state polytope $\gstp_v(R)$  (resp. the interior of $\gstp_v(R)$) contains the origin.
\end{defnquote}
In any GIT problem of Hilbert schemes, mainly because  equations in generic co-ordinates should bear enough symmetry with respect to the variables, Hilbert points are generically stable (Proposition~\ref{P:gen-stab-hilb}).
In general, we have the following main theorem:

\begin{theoremquote} Let $G$ be a semisimple algebraic group and $V$ be a non-trivial rational representation. Then
\begin{enumerate}
\item every nonzero $v \in V$ is generically semistable, and;
\item it is generically stable if and only if its isotropy subgroup $G_v$ contains no nontrivial almost simple factor of $G$.
\end{enumerate}
\end{theoremquote}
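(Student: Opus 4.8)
The plan is to reduce both assertions to a statement about the weight polytope of the $G$‑submodule $M:=\langle G.v\rangle\subseteq V$ generated by $v$ (a nonzero submodule, since $v\neq 0$), and then to read them off from the convex geometry of $W$‑invariant polytopes together with the structure theory of semisimple groups. Take $R$ to be a maximal torus $T$, with Weyl group $W=N_G(T)/T$; write $\pi_\chi\colon V\to V_\chi$ for the $T$‑equivariant projection onto the $\chi$‑weight space and $\mathrm{wt}(w)=\{\chi\in X(T):\pi_\chi(w)\neq 0\}$, so $\stp_w(T)=\conv(\mathrm{wt}(w))$. The first step is to prove that $\gstp_v(T)=\conv(\mathrm{wt}(M))$. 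For any $g$ one has $g.v\in M$, hence $\mathrm{wt}(g.v)\subseteq\mathrm{wt}(M)$ (as $M=\bigoplus_\chi(M\cap V_\chi)$). Conversely, fix $\chi\in\mathrm{wt}(M)$, i.e.\ $M\cap V_\chi\neq 0$; the morphism $G\to V_\chi$, $g\mapsto\pi_\chi(g.v)$, cannot vanish identically, for otherwise $\pi_\chi(M)=\pi_\chi(\mathrm{span}\,G.v)=0$ although $\pi_\chi(M)\supseteq M\cap V_\chi\neq 0$. Since $G$ is connected, hence irreducible, this morphism is nonzero on a nonempty open $U_\chi\subseteq G$; intersecting the finitely many $U_\chi$ ($\chi\in\mathrm{wt}(M)$) gives a nonempty open $U'$ on which $\mathrm{wt}(g.v)=\mathrm{wt}(M)$, so $\stp_{g.v}(T)=\conv(\mathrm{wt}(M))$ there. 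As any two nonempty opens of the irreducible variety $G$ meet, this common value is by definition $\gstp_v(T)$.

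For part (1): $\mathrm{wt}(M)$ is stable under $N_G(T)$, hence $W$‑invariant, so its barycenter $b=\frac{1}{\#\mathrm{wt}(M)}\sum_{\chi\in\mathrm{wt}(M)}\chi$ is a $W$‑fixed vector of $X(T)_\bR$. Since $G$ is semisimple, $X(T)_\bR^W=0$, so $b=0$; as $b\in\conv(\mathrm{wt}(M))=\gstp_v(T)$, the origin lies in $\gstp_v(T)$, i.e.\ every nonzero $v$ is generically semistable.

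For part (2): put $P=\conv(\mathrm{wt}(M))$. I would first observe that a $W$‑invariant polytope $P\ni 0$ has $0$ in its interior if and only if $P$ is full‑dimensional in $X(T)_\bR$: one direction is clear, and if $0\in\partial P$ with $P$ full‑dimensional, a supporting linear functional $\ell\neq 0$ at $0$ satisfies $\ell\circ w^{-1}\geq 0$ on $P$ for every $w\in W$ (as $w^{-1}P=P$), so $\sum_{w\in W}\ell\circ w^{-1}\geq 0$ on $P$; this sum is $W$‑invariant, hence $\equiv 0$, forcing $\ell|_P\equiv 0$, a contradiction. Thus $v$ is generically stable iff $P$ is full‑dimensional iff $\mathrm{wt}(M)$ affinely spans $X(T)_\bR$, equivalently (since $0\in P$) linearly spans it, equivalently $\mathrm{wt}(M)$ lies in no proper $W$‑invariant subspace. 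Now write $G=G_1\cdots G_s$ for the almost simple factors and $T_i\subseteq G_i$ for the induced maximal tori; then $X(T)_\bR=\bigoplus_i X(T_i)_\bR$ as $W=\prod_i W_i$‑modules, each summand being $W$‑irreducible (the reflection representation of $W_i$) and the summands pairwise non‑isomorphic (for $i\neq j$, $W_j$ acts trivially on $X(T_i)_\bR$ but not on $X(T_j)_\bR$). Hence every $W$‑invariant subspace is $\bigoplus_{i\in J}X(T_i)_\bR$, and $\mathrm{wt}(M)$ lies in a proper one iff for some $i_0$ every $T_{i_0}$‑weight of $M$ is trivial. Since $M$ is a completely reducible $G_{i_0}$‑module, this holds iff $G_{i_0}$ acts trivially on $M$ (an irreducible representation of the almost simple $G_{i_0}$ with trivial restriction to $T_{i_0}$ has trivial highest weight, hence is trivial). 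Finally, because $G_{i_0}\trianglelefteq G$, $G_{i_0}$ acts trivially on $M=\mathrm{span}\,G.v$ iff $G_{i_0}\subseteq G_v$: if $G_{i_0}\subseteq G_v$ then $h.(g.v)=g.\big((g^{-1}hg).v\big)=g.v$ for $h\in G_{i_0}$, $g\in G$; the converse is immediate. Chaining these equivalences, $v$ is generically stable iff $G_v$ contains no almost simple factor of $G$.

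The real content sits in the first step — arranging that \emph{all} weights of $M$ are simultaneously attained in $g.v$ for a generic $g$ — and this is exactly where irreducibility of $G$ enters; everything afterward assembles standard facts (vanishing of $W$‑invariants for a semisimple group, the isotypic structure of $X(T)_\bR$, complete reducibility and highest‑weight theory for $G_{i_0}$, normality of almost simple factors). One should also keep an eye on the harmless degenerate case where $M$ is the trivial module ($v$ a $G$‑fixed vector): then $\gstp_v(T)=\{0\}$ and $G_v=G$ contains every almost simple factor, consistent with both conclusions.
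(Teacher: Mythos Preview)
Your proof is correct and takes a genuinely different route from the paper's. The key new ingredient is your identification of the generic state polytope with the full weight polytope of the $G$-submodule $M=\langle G.v\rangle$; the paper never makes this identification, and instead for part~(1) only arranges that $\Xi_{g.v}(R)$ contains a single Weyl orbit $\weyl.\chi$ of some weight of $v$, then invokes that the convex hull of a nonzero Weyl orbit contains the origin. Your barycenter argument is slicker and yields the same conclusion via $X(T)_\bR^{\weyl}=0$. For part~(2) the paper argues more directly: if no almost simple factor $G_i$ lies in $G_v$, it first shows one can choose a maximal torus $R$ whose factors $R_i$ all act nontrivially on $v$, then (after a generic translate) produces for each $i$ a state $\lambda_i$ nontrivial on $R_i$ and uses that the $\weyl_i$-orbit of $\lambda_i$ spans $\mathfrak r_i^*$ (irreducibility of the root system). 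You replace this hands-on step by the structural observation that $X(T)_\bR=\bigoplus_i X(T_i)_\bR$ is a multiplicity-free $\weyl$-module, so the only $\weyl$-invariant subspaces are the obvious coordinate ones; combined with your exact description of $\gstp_v(T)$ this makes the equivalence with ``some $G_{i_0}$ acts trivially on $M$'' immediate. Your approach buys a cleaner, more conceptual argument and the bonus of an explicit formula for $\gstp_v(T)$; the paper's approach is slightly more elementary in that it never needs to identify the whole generic state, only to exhibit enough states in it.
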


A geometric formulation of the theorem is as follows. Note that an isotropy group $G_v$ contains a nontrivial almost simple factor of $G$ if and only if all isotropy subgroups $G_{g.v} = g G_v g$ do so. This is what we mean  in (2) below  where we talk about ``the isotropy subgroup of the orbit defined up to conjugacy''.

\begin{theoremquote} Let $G$ be a semisimple algebraic group acting linearly on $X = \bP(V)$. Let $R \subset G$ be a maximal torus.
\begin{enumerate}
\item Any $G$-orbit in $X$ intersects the locus $X^{ss}(L,R)$ of semistable points with respect to the $R$ action and the $R$-linearized line bundle $L=\cO_{\bP(V)}(+1)$.
\item A $G$-orbit in $X$ intersects the locus $X^s(L,R)$ of stable points if and only if the isotropy subgroup of the orbit (defined up to conjugacy) contains no nontrivial almost simple factor of $G$.
\end{enumerate}
\end{theoremquote}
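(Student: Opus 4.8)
The plan is to read the statement off the representation-theoretic version stated just above, via the standard dictionary between torus semistability and the position of the origin relative to the state polytope. Recall first that for a maximal torus $R \subset G$ and a nonzero $w \in V$ one has $[w] \in X^{ss}(L,R)$ if and only if $0 \in \stp_w(R)$ — the equivalence recalled in the Introduction — and $[w] \in X^s(L,R)$ if and only if $0$ lies in the interior of $\stp_w(R)$, which is proved in the same way by a separating-hyperplane argument (a nontrivial $1$-ps of $R$ on which all weights of $w$ are nonnegative is the same datum as a supporting hyperplane of $\stp_w(R)$ through the origin). By Proposition~\ref{P:gstv} there is a dense open $U \subseteq G$ with $\stp_{g.v}(R) = \gstp_v(R)$ for every $g \in U$. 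Combining the two, if $v$ is generically semistable then $[g.v] \in X^{ss}(L,R)$ for all $g \in U$ and the orbit $G\cdot[v]$ meets $X^{ss}(L,R)$; likewise, generic stability of $v$ forces $G\cdot[v]$ to meet $X^s(L,R)$.

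Part (1) follows immediately, since the representation-theoretic theorem says every nonzero $v$ is generically semistable. For the ``if'' half of part (2), if the isotropy subgroup $G_v$ contains no nontrivial almost simple factor of $G$ then that theorem gives that $v$ is generically stable, so $G\cdot[v]$ meets $X^s(L,R)$ by the first paragraph. Moreover, because the almost simple factors of $G$ are normal subgroups, $H \subseteq G_v$ is equivalent to $H \subseteq G_{g.v}$ for every $g \in G$, so the hypothesis really is a property of the orbit independent of the representative, as noted in the remark preceding the statement.

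The only step that is not a formal translation is the ``only if'' half of part (2), where one must exclude \emph{every} point of the orbit, not just the generic one. So suppose a nontrivial almost simple factor $H$ of $G$ is contained in the isotropy subgroup $G_{[w]}$ of a point $[w] \in X$ of the orbit. Since $H$ is semisimple it has no nontrivial character, so it acts trivially on the line $k\cdot w$ and hence fixes $w$; that is, $w \in V^H$. As $H$ is normal in $G$, the intersection $R \cap H$ is a maximal torus of $H$, in particular a subtorus of $R$ of dimension $\geq 1$, and every $R$-weight occurring in $V^H$ — in particular every weight of $w$ — restricts trivially to $(R \cap H)^\circ$. Therefore $\stp_w(R)$ is contained in the subspace $\{\chi \in X(R)_{\bR} : \chi|_{(R\cap H)^\circ} = 0\}$ of $X(R)_{\bR}$, which has positive codimension; so $\stp_w(R)$ has empty interior and $[w] \notin X^s(L,R)$ by the first paragraph. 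Since $V^H$ is a $G$-subrepresentation ($H$ being normal), the same argument applies to every $g.w$, so no point of the orbit is $R$-stable.

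I expect the one delicate point to be the last paragraph: identifying $R\cap H$ with a maximal torus of $H$ and checking that the $R$-weights of vectors fixed by $H$ are killed by $(R\cap H)^\circ$ — equivalently, organizing $R$ compatibly with the almost simple decomposition $G = H\cdot G'$. Everything else is bookkeeping built on the representation-theoretic theorem and the torus dictionary of the first paragraph.
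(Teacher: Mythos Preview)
Your proof is correct and matches the paper's approach: the paper proves the representation-theoretic Theorem~\ref{T:generic-stability} in Section~\ref{S:generic-semistability} and presents this geometric statement as its reformulation via the torus-semistability/state-polytope dictionary, exactly as you do. One remark: the ``only if'' half of (2) is in fact also a formal translation, since by construction $\gst_v(R) \supseteq \Xi_{g.v}(R)$ for \emph{every} $g\in G$ (the generic state is by definition the maximal one, as $S^v$ consists of the weights vanishing for all $g$), hence $\stp_{g.v}(R) \subseteq \gstp_v(R)$, so if any point of the orbit is $R$-stable then $v$ is already generically stable --- your direct argument through $(R\cap H)^\circ$ is correct but unnecessary, though it does parallel the paper's own proof of that implication in Theorem~\ref{T:generic-stability}(2).
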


\noindent We shall prove our main theorem  in Section~\ref{S:generic-semistability}.

\

A corresponding statement for the case when $G$ is a reductive group can be easily deduced from our main theorem above by using the relation between the state polytope of $G$ and the state polytope of the semisimple derived group $[G,G]$. After this work was completed, the first named author and Dao Phoung Bac carried this out, and we state the result here for completeness and for the convenience of the readers.

\begin{theoremquote}\cite[Theorem~5.1]{BH} Let
$G$ be a reductive group and let $V$ be a rational representation of $G$. Then $v\in V$ is generically semistable if and only if it is semistable with respect to the radical $\mathcal R(G)$ of $G$.
\end{theoremquote}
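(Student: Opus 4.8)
The plan is to reduce the statement to our main theorem for semisimple groups via the almost-direct-product decomposition $G=\mathcal R(G)\cdot[G,G]$. We may assume $G$ is connected and $V$ finite-dimensional (maximal tori, the radical, and destabilizing one-parameter subgroups all lie in $G^\circ$, and $v$ lies in a finite-dimensional $G$-subrepresentation). Then $\mathcal R(G)=Z(G)^\circ$ is a central torus, $[G,G]$ is a connected semisimple group, $\mathcal R(G)\cap[G,G]$ is finite, and the multiplication map $\mathcal R(G)\times[G,G]\to G$ is an isogeny. Fix a maximal torus $R\subset G$; then $R'=R\cap[G,G]$ is a maximal torus of $[G,G]$, one has $R=\mathcal R(G)\cdot R'$, and restriction of characters gives an isomorphism $X(R)_{\bR}\cong X(\mathcal R(G))_{\bR}\oplus X(R')_{\bR}$. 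Since $\mathcal R(G)$ is central, $V$ decomposes as a $G$-module into $\mathcal R(G)$-isotypic summands $V=\bigoplus_\chi V_\chi$, where $\mathcal R(G)$ acts on $V_\chi$ through $\chi$ and each $V_\chi$ is a rational representation of $[G,G]$. Writing $v=\sum_\chi v_\chi$, semistability of $v$ with respect to $\mathcal R(G)$ is precisely the condition $0\in\conv\{\chi:v_\chi\ne0\}$, and this condition is unchanged on replacing $v$ by $g.v$, because $(g.v)_\chi=g.v_\chi$ for every $g\in G$.

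The ``only if'' direction needs no semisimplicity. The inclusion $\mathcal R(G)\subset R$ induces a linear projection $\pi\colon X(R)_{\bR}\to X(\mathcal R(G))_{\bR}$, and by the state axiom $\pi(\Xi_{g.v}(R))=\Xi_{g.v}(\mathcal R(G))=\Xi_v(\mathcal R(G))$ for every $g$. If $v$ is generically semistable then $0\in\gstp_v(R)=\stp_{g.v}(R)$ for generic $g$; applying $\pi$, which carries convex hulls to convex hulls, gives $0\in\conv\Xi_v(\mathcal R(G))$, i.e.\ $v$ is semistable with respect to $\mathcal R(G)$.

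For the ``if'' direction, suppose $0\in\conv\{\chi:v_\chi\ne0\}$. For each $\chi$ with $v_\chi\ne0$ for which $[G,G]$ acts non-trivially on $V_\chi$, our main theorem applied to $v_\chi\in V_\chi$ yields a nonempty open $W_\chi\subset[G,G]$ with $0\in\conv\Xi_{h.v_\chi}(R')$ for all $h\in W_\chi$ (when $[G,G]$ acts trivially on $V_\chi$ this holds for every $h$). As there are finitely many such $\chi$, we may choose $(g_0,g_1)\in\mathcal R(G)\times[G,G]$ with $g_1\in\bigcap_\chi W_\chi$ and $g:=g_0g_1$ lying in the open set $U$ of Proposition~\ref{P:gstv} (the two conditions define nonempty opens in the irreducible variety $\mathcal R(G)\times[G,G]$). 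Using that $V_\chi$ is $G$-stable and that $g_0$ acts on $V_\chi$ by the nonzero scalar $\chi(g_0)$, one identifies
\[
\Xi_{g.v}(R)\;=\;\bigcup_{\chi\,:\,v_\chi\ne0}\{\chi\}\times\Xi_{g_1.v_\chi}(R')\ \subset\ X(\mathcal R(G))\oplus X(R').
\]
Hence $\{\chi\}\times\{0\}\subset\conv\Xi_{g.v}(R)$ for each relevant $\chi$; taking convex hulls and using $0\in\conv\{\chi:v_\chi\ne0\}$ gives $(0,0)\in\conv\Xi_{g.v}(R)=\gstp_v(R)$, so $v$ is generically semistable.

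The only substantive input is the semisimple main theorem; the rest is bookkeeping with $G=\mathcal R(G)\cdot[G,G]$. The points I expect to need care---though none is a real obstacle---are the finite-index ambiguities introduced by the isogeny $\mathcal R(G)\times[G,G]\to G$, which disappear after tensoring character lattices with $\bR$; the verification that a single $g$ simultaneously realizes $\gstp_v(R)$ and all of the generic state polytopes $\gstp_{v_\chi}(R')$; and the identification of $\Xi_{g.v}(R)$ displayed above, together with the fact that convex hull commutes with the coordinate projections of $X(R)_{\bR}$.
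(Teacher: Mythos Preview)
The paper does not actually prove this statement; it is quoted from \cite{BH} and only the strategy is indicated (``easily deduced from our main theorem above by using the relation between the state polytope of $G$ and the state polytope of the semisimple derived group $[G,G]$''). Your argument follows precisely this outline and is correct. One small simplification: you do not need to arrange that $g=g_0g_1$ lies in the distinguished open set $U$ of Proposition~\ref{P:gstv}. Since $\Xi_{g.v}(R)\subset\gst_v(R)$ for \emph{every} $g\in G$ (by the definition of $S^v$), it suffices to exhibit a single $g$ with $0\in\stp_{g.v}(R)$, and then $0\in\gstp_v(R)$ automatically; this removes the need to intersect two open conditions.
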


\

We work over an algebraically closed field $k$ of characteristic zero. Henceforth, $G$ will always mean a connected linear algebraic group.



\section{Generic states}\label{S:gst}

Let $\phi : G \to \GL(V)$ be a rational representation and $\sigma : G \times V \to V$ be the corresponding $G$ action on $V$. Given any torus $R \subset V$, we have the weight space decomposition
\[
V = \bigoplus_{\chi \in P(V)} V_\chi
\]
where $V_\chi$ is the weight space corresponding to $\chi \in X(R) = \hom(R, \bG_m)$ i.e. $R$ acts on $v \in V_\chi$ by $t.v = \chi(t)v$, and $P(V) = \{ \chi \in X(R) \, | \, V_\chi \ne 0\}$.

Fix a basis $\{e_{\chi,\a}\}$ for each $V_\chi$ and let $\{f_{\chi,\a}\}$ denote the dual basis.  We shall not index the $\a$'s here as there is no danger of confusion. For each $v \in V$, we write the weight decomposition of $v$ as
\[
v = \sum_{\chi}v_{\chi} = \sum_\chi\left(\sum_\a v_{\chi,\a}e_{\chi,\a}\right) =\sum_\chi \left(\sum_\a f_{\chi,\a}(v) e_{\chi,\a}\right).
\]
Define
\[
h_{\chi,\a} := \sigma^\sharp(f_{\chi,\a}) \in k[G]\otimes_kk[V]
\]
so that $h_{\chi,\a}(g,v) \ne 0$ if and only if $(g.v)_{\chi,\a} \ne 0$. Hence $(g.v)_{\chi} \ne 0$ if and only if $h_{\chi,\a}(g,v)\ne 0$ for some $\a$.
For any  $\chi \in P(V)$, we define
\[
U_\chi := \bigcup_\a (G\times V)_{h_{\chi,\a}}.
\]
This is the locus of $(g,v)$ with $(g.v)_\chi \ne 0$.
Note that for any $\chi \in X(R)$, $V_\chi = 0$ if and only if $h_{\chi,\a} \equiv 0$ in $k[G]\otimes_kk[V]$ for all $\a$. Hence $U_\chi \ne \emptyset$ if and only if $\chi \in P(V)$.

 Let $Z_\chi$ denote the closed subvariety defined by the ideal
\[
I_\chi = \langle h_{\chi,\a} \rangle_\a.
\]
For any subset $S \subset P(V)$, we associate a locally closed subvariety
\[
U_S = \left(\bigcap_{\chi \in S} Z_\chi\right) \bigcap \left(\bigcap_{\chi\not\in S} U_\chi\right).
\]
We remark that $U_S = \emptyset$ if for some $\chi \not\in S$, $h_{\chi,\a} \in \sum_{\chi \in S} I_\chi$ for all $\a$.

It is clear from the definition that
$\Xi_{g.v}(R) = P(V)\setminus S$ if and only if $(g,v) \in U_S$. Also clear is that $U_\emptyset = \bigcap_{\chi \in P(V)} U_\chi$ is Zariski open and dense in $G\times V$.
We summarize our findings into a proposition:

\begin{prop}\label{P:gstV} There exists a decomposition
\[
G\times V = \coprod_{S \subset P(V)} U_S
\]
into locally closed subvarieties such that  $(g,v)$ and $(g',v')$ belong to $U_S$ for some $S$ if and only if
\[
\Xi_{g.v}(R) = \Xi_{g'.v'}(R).
\]
The big open set $U_\emptyset$ is Zariski dense and open in $G\times V$ and $\Xi_{g.v}(R) = P(V)$ for any $(g,v) \in U_\emptyset$.
\end{prop}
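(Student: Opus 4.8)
The plan is to verify that the locally closed subvarieties $U_S$ already constructed furnish the asserted decomposition; essentially all of the content is contained in their definition, and what remains is bookkeeping. First I would record that each $U_S=\bigl(\bigcap_{\chi\in S}Z_\chi\bigr)\cap\bigl(\bigcap_{\chi\notin S}U_\chi\bigr)$ is locally closed: it is the intersection of the closed subvarieties $Z_\chi=V(I_\chi)$ with the open subvarieties $U_\chi$, each of the latter being a finite union of principal opens $(G\times V)_{h_{\chi,\alpha}}$; since $V$ is finite dimensional, $P(V)$ is finite, so these are finite intersections.

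Next comes the dictionary between the conditions defining $U_S$ and the state. From $h_{\chi,\alpha}(g,v)=f_{\chi,\alpha}(g.v)$ one reads off that $(g,v)\in U_\chi$ exactly when $(g.v)_\chi\ne 0$, and dually $(g,v)\in Z_\chi$ exactly when $(g.v)_\chi=0$. Hence $(g,v)\in U_S$ if and only if $(g.v)_\chi=0$ for every $\chi\in S$ while $(g.v)_\chi\ne 0$ for every $\chi\in P(V)\setminus S$, that is, if and only if $\Xi_{g.v}(R)=P(V)\setminus S$. The remaining claims are then formal: the strata are pairwise disjoint, since membership in $U_S$ forces $S=P(V)\setminus\Xi_{g.v}(R)$; they cover $G\times V$, since for an arbitrary $(g,v)$ one may take $S:=P(V)\setminus\Xi_{g.v}(R)$; and $(g,v),(g',v')$ lie in a common stratum if and only if $\Xi_{g.v}(R)=\Xi_{g'.v'}(R)$.

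For the last sentence I would use that $G\times V$ is irreducible --- $G$ is a connected linear algebraic group and $V$ an affine space --- together with the observation (already made) that $U_\chi\ne\emptyset$ whenever $\chi\in P(V)$; concretely $h_{\chi,\alpha}\not\equiv 0$ for some $\alpha$ because $h_{\chi,\alpha}(e,e_{\chi,\alpha})=f_{\chi,\alpha}(e_{\chi,\alpha})=1$. A finite intersection of nonempty open subsets of an irreducible scheme is nonempty, hence dense, and open, so $U_\emptyset=\bigcap_{\chi\in P(V)}U_\chi$ is dense and open; on it $S=\emptyset$, whence $\Xi_{g.v}(R)=P(V)$.

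I do not expect a genuine obstacle. The one subtlety worth a sentence --- precisely the content of the remark preceding the statement, and the reason not to overstate the result as a bijection between subsets of $P(V)$ and strata --- is that many of the $U_S$ are empty: a weight forced to be nonzero by a relation $h_{\chi,\alpha}\in\sum_{\chi'\in S}I_{\chi'}$ can never be removed from the state, so $\Xi_{g.v}(R)$ runs over only a restricted family of subsets. This is harmless, since an empty set is vacuously locally closed and contributes nothing to the disjoint union.
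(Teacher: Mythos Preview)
Your proof is correct and follows exactly the paper's approach: the paper assembles the definitions of $U_\chi$, $Z_\chi$, and $U_S$ in the discussion preceding the proposition, notes that $(g,v)\in U_S$ iff $\Xi_{g.v}(R)=P(V)\setminus S$ and that $U_\emptyset$ is dense open, and then simply records the proposition as a summary of these findings. You have supplied the same argument with slightly more detail (the explicit witness $h_{\chi,\alpha}(e,e_{\chi,\alpha})=1$ for nonemptiness and the appeal to irreducibility of $G\times V$), which is fine.
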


Fix $v \in V$ and consider the corresponding GIT state $\Xi_v$. One can give a locally closed decomposition of $G$ in a similar fashion:

\begin{prop}\label{P:gstv} There exists a decomposition
\begin{equation}\label{E:gstv}
G = \coprod_{S\subset P(V)} U^v_S \tag{$\dagger$}
\end{equation}
into locally closed subvarieties such that $\Xi_{g.v}(R) = \Xi_{g'.v}(R)$ if and only if $g$ and $g'$ are in the same stratum of the locally closed decomposition.
There exists a distinguished $S^v \subset P(V)$ such that $U^v_{S^v}$ is Zariski dense open subscheme of $G$.
\end{prop}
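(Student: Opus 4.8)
The plan is to obtain the decomposition (\ref{E:gstv}) by restricting the decomposition of Proposition~\ref{P:gstV} to the slice $G\times\{v\}\cong G$. Concretely, I would let $\iota_v\colon G\to G\times V$ be the morphism $g\mapsto(g,v)$ and set $U^v_S:=\iota_v^{-1}(U_S)$ for each $S\subset P(V)$. Since $\iota_v$ is a morphism and each $U_S$ is locally closed in $G\times V$, each $U^v_S$ is locally closed in $G$; and since taking preimages of a disjoint cover gives a disjoint cover, $G=\coprod_S U^v_S$. By the characterization in Proposition~\ref{P:gstV}, $g$ and $g'$ lie in the same stratum $U^v_S$ if and only if $(g,v)$ and $(g',v)$ lie in the same $U_S$, i.e.\ if and only if $\Xi_{g.v}(R)=\Xi_{g'.v}(R)$. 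This would settle the first assertion.

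For the distinguished stratum, the subtlety is that one cannot simply take $S^v=\emptyset$: the preimage $\iota_v^{-1}(U_\emptyset)$ may well be empty, which is precisely the phenomenon that makes the generic state polytope of $v$ interesting (it can be strictly smaller than $\conv P(V)$). Instead I would restrict the defining functions to the slice. Write $h^v_{\chi,\a}(g):=h_{\chi,\a}(g,v)\in k[G]$, so that $(g.v)_\chi\neq 0$ if and only if $h^v_{\chi,\a}(g)\neq 0$ for some $\a$. For a fixed $\chi\in P(V)$ there are two mutually exclusive cases: either $h^v_{\chi,\a}\equiv 0$ for every $\a$, in which case $(g.v)_\chi=0$ for all $g\in G$; or $h^v_{\chi,\a}\not\equiv 0$ for some $\a$, in which case $\{\,g\in G\mid (g.v)_\chi\neq 0\,\}=\bigcup_\a G_{h^v_{\chi,\a}}$ is a nonempty, hence (as $G$ is irreducible) dense, open subset of $G$. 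I would then define
\[
S^v:=\{\,\chi\in P(V)\mid h^v_{\chi,\a}\equiv 0 \text{ for all }\a\,\}.
\]
With this choice, the conditions cutting out $U^v_{S^v}$ that are indexed by $\chi\in S^v$ (namely $(g.v)_\chi=0$) hold for every $g$, so they impose nothing, and
\[
U^v_{S^v}=\bigcap_{\chi\in P(V)\setminus S^v}\{\,g\in G\mid (g.v)_\chi\neq 0\,\}=\bigcap_{\chi\in P(V)\setminus S^v}\ \bigcup_\a G_{h^v_{\chi,\a}},
\]
a finite intersection of dense open subsets of the irreducible variety $G$, hence itself dense and open; moreover $\Xi_{g.v}(R)=P(V)\setminus S^v$ for every $g\in U^v_{S^v}$. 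This would complete the proof.

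I do not expect a genuine obstacle here: the statement is essentially the restriction of Proposition~\ref{P:gstV} together with the elementary fact that a nonempty open subset of an irreducible variety is dense. The one point requiring a little care is isolating $S^v$ correctly and observing that the equations defining $U^v_{S^v}$ for the indices $\chi\in S^v$ are vacuous, so that the generic stratum does not collapse to the empty set.
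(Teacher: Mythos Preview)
Your proposal is correct and follows essentially the same approach as the paper: restrict the decomposition of Proposition~\ref{P:gstV} to the slice $G\times\{v\}$ via $\iota_v$, define $h^v_{\chi,\a}=\iota_v^\sharp(h_{\chi,\a})$, and take $S^v$ to be the set of $\chi$ with $h^v_{\chi,\a}\equiv 0$ for all $\a$ (equivalently, $(g.v)_\chi=0$ for all $g$), so that $U^v_{S^v}$ is a finite intersection of nonempty opens in the irreducible variety $G$. Your explicit remark that $S^v=\emptyset$ need not work, and that the closed conditions indexed by $\chi\in S^v$ are vacuous, makes the argument slightly more transparent than the paper's version but is otherwise the same.
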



\begin{proof}
Let $h^v_{\chi,\a} := \sm_v^\sharp(f_{\chi,\a}) \in k[G]$ where $\sm_v : G \to V$ is the orbit map $g \mapsto g.v$. Put differently, if $\iota_v : G \isomto G\times \{v\} \inj G \times V$ denotes the inclusion, then $h^v_{\chi,\a} = \iota_v^\sharp (h_{\chi,\a})$.

Define $Z^v_\chi$ to be closed subscheme cut out by the ideal $I_\chi = \langle h^v_{\chi,\a} \rangle_\a$, and define
\[
U^v_\chi := G \setminus Z^v_\chi.
\]
Note that $g \in U^v_\chi$ if and only if $(g.v)_\chi \ne 0$. In other words, $U^v_\chi$ equals $U_\chi\cap (G\times \{v\})$ which also follows from that $h^v_{\chi,\a} = \iota_v^\sharp (h_{\chi,\a})$.

For any subset $S$ of $P(V)$, we analogously define
\[
U^v_S = \left(\bigcap_{\chi\in S} Z^v_\chi\right) \bigcap \left(\bigcap_{\chi\not\in S} U^v_\chi\right).
\]
Again, $U^v_S$ is naturally identified with $U_S \cap (G\times \{v\})$, and $\Xi_{g.v}(R) = P(V)\setminus S$ if and only if $g \in U^v_S$. We note here that $U^v_S$ may very well be empty.

Let $S^v := \{\chi \in P(V) \, | \, (g.v)_\chi = 0, \, \forall g\in G\}$, and let $U^v_{S^v} = \bigcap_{\chi\in P(V)\setminus S^v}U^v_\chi$. Since $U^v_\chi\ne\emptyset$ for any $\chi \in P(V)\setminus S^v$,  $U^v_{S^v}$ is a nonempty dense open subscheme of $G$. For any $g \in U^v_{S^v}$, by definition $(g.v)_\chi \ne 0$ for any $\chi \in P(V)\setminus S^v$, which means that $\Xi_{g.v}(R) = P(V)\setminus S^v$ for any $g \in U^v_{S^v}$.
\end{proof}
The proposition above establishes the existence of an open set $U$ in the Proposition-Definition from the introduction.

\begin{defn}\label{D:gstv} The {\it (primary) generic states} of $v$ with respect to $R$ is
	\[
	\gst_v(R) = \Xi_{g.v}(R)
	\]
for any $g \in U^v_{S^v}$.
	\end{defn}
Note that $\gst_v(R) = P(V) \setminus S^v$ where $S^v$ is the distinguished subset of weights defined in the proof of Proposition~\ref{P:gstv}.
\begin{defn} \begin{enumerate}
\item The {\it primary generic state polytope} of $v$ with respect to $R$ is the convex hull in $X(R)_\bR :=X(R)\otimes_\bZ \bR$ of the primary generic states of $v$ with respect to $R$. It is denoted by $\gstp_v(R)$.

\item For any $S \ne \emptyset \subset P(V)$ such that $U_S^v \ne \emptyset$, the convex hull of $\Xi_{g.v}(R)$ in $X(R)_\bR$ for some (and hence for any) $g \in U^v_S$ is called the {\it secondary generic state polytope} of $v$ corresponding to $S$, and is denoted by $\gstp_v(R;S)$.
\end{enumerate}
\end{defn}
Note that $\gstp_v(R;S)$ is just the convex hull of $P(V)\setminus S$.

\section{Generic  Gr\"obner fans}
In this section, we show that the generic Gr\"obner fan \cite{Romer} is virtually a special case of a generic state considered in the previous section.
Let $I$ be a graded ideal of the graded algebra $$\oplus_{m\ge 0}S^mV^* \cong k[x_1,\dots, x_n]$$ where we fixed a basis $\{x_1,\dots,x_n\}$ of $V^*$.  The Gr\"obner fan $GF(I)$ of $I$ is defined as follows. Define an equivalence relation on $\bR^n$ by $w \sim w'$, $w,w' \in \bR^n$ if $\ii_{\prec_w}I = \ii_{\prec_{w'}}I$, where $\prec_w$ is the weight order corresponding to $w$ (we use the lexicographic order for tie-breaking). Let $[w]$ be the equivalence class containing $w$. Then $GF(I)$ is the collection of $\bar{[w]}$, where $\bar{[w]}$ is the closure of $[w]$ in $\bR^n$, and these constitute a fan \cite[Proposition~2.4]{Sturmfels}. The maximal cones of $GF(I)$ are in one-to-one correspondence with the reduced Gr\"obner bases of $I$.

We apply Proposition~\ref{P:gstv} to $G = \GL(V)$ acting naturally on $\bigwedge^{P(m)}S^mV^*$. Let $R$ be the maximal torus of $G$ diagonalized by $x_i$.  Let $P(m)$ denote the Hilbert polynomial of $I$. Given $\zeta_m := \wedge^{P(m)}I_m \in \bP(\bigwedge^{P(m)}S^mV^*)$, the state polytope $\stp_{\zeta_m}(R)$ is precisely the $m$th state polytope $St_m(I)$ in \cite{Sturmfels}. By Proposition~\ref{P:gstv}, there is a Zariski dense open subscheme $U_m \subset G$ such that $St_m(I)$ remains unchanged as long as $g \in U_m$.

Sturmfels defines  state polytope $St(I)$ of $I$ to be the Minkowski sum $St(I) = \sum_{m=2}^{D}St_m(I)$, where $D$ is the largest degree of any element in a minimal universal Gr\"obner basis of $I$ \cite[(2.8)]{Sturmfels}. Then we have a Zariski open dense subscheme $\cap^D U_m$ such that $St(g.I)$ remains the same as long as $g \in \cap^DU_m$. Now, since the normal fan of the state polytope $St(I)$ is the Gr\"obner fan $GF(I)$ of $I$ \cite[Theorem~2.5]{Sturmfels}, we have:

\begin{prop}\label{P:ggf}  There exists a Zariski open dense subscheme $U \subset G$ such that $GF(g.I)$ remains the same for any $g \in U$.
\end{prop}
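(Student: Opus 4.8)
The plan is to reduce Proposition~\ref{P:ggf} to finitely many applications of Proposition~\ref{P:gstv}. First I would record the two elementary facts that make the setup go through: a linear change of coordinates $g\in\GL(V)$ preserves the Hilbert function of $I$, so $g.I$ has the same Hilbert polynomial $P$ as $I$, and $\wedge^{P(m)}(g.I)_m = g.\zeta_m$ inside $W_m := \bigwedge^{P(m)}S^mV^*$ (a linear coordinate change is degree preserving). Consequently the $m$th state polytope satisfies $St_m(g.I) = \stp_{g.\zeta_m}(R)$, the state polytope of the point $g.\zeta_m$ of the rational $\GL(V)$-representation $W_m$ with respect to the fixed maximal torus $R$; a choice of lift of $\zeta_m$ to $W_m$ is harmless since the state polytope depends only on the line. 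Then, for each $m$ I would apply Proposition~\ref{P:gstv} to the group $\GL(V)$, the representation $W_m$, and the point $\zeta_m$, obtaining a Zariski-dense open $U_m := U^{\zeta_m}_{S^{\zeta_m}}\subset\GL(V)$ on which $\Xi_{g.\zeta_m}(R)$, hence $St_m(g.I) = \gstp_{\zeta_m}(R)$, is independent of $g$.

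The only delicate point is to see that only finitely many $m$ — and the \emph{same} finite range for every $g$ — are needed. By \cite[(2.8), Theorem~2.5]{Sturmfels}, $GF(J)$ is the normal fan of $\sum_{m=2}^{D(J)}St_m(J)$, and in fact, since $GF(J)$ refines the normal fan of each individual $St_m(J)$ while the reduced Gröbner bases (hence the initial ideals of $J$) are already separated in degrees $\le D(J)$, the Minkowski sum $\sum_{m=2}^{N}St_m(J)$ has normal fan $GF(J)$ for \emph{every} $N\ge D(J)$. Thus it suffices to exhibit a single $D$ with $D\ge D(g.I)$ for all $g\in\GL(V)$. Such a uniform bound exists: $D(g.I)$ is bounded by a function of the Castelnuovo--Mumford regularity and the Hilbert polynomial of $g.I$, both of which are $\GL(V)$-invariant (regularity being a cohomological, coordinate-free invariant), so one may take $D=D(\mathrm{reg}\,I,\,P)$, a constant depending only on $I$.

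Finally I would set $U:=\bigcap_{m=2}^{D}U_m$, a finite intersection of dense open subschemes of the irreducible scheme $\GL(V)$, hence itself dense and open. For any $g\in U$ each summand $St_m(g.I)=\gstp_{\zeta_m}(R)$, $2\le m\le D$, is independent of $g$, so $Q:=\sum_{m=2}^{D}St_m(g.I)$ is one fixed polytope; by the choice of $D$ and \cite[Theorem~2.5]{Sturmfels}, $GF(g.I)$ is the normal fan of $Q$, the same fan for every $g\in U$, which is the assertion. The main obstacle is precisely the uniformity in the middle step: one must make sure the truncation degree $D$ can be fixed once and for all, independently of $g$, rather than left as the a priori $g$-dependent quantity $D(g.I)$ occurring in Sturmfels' definition of $St(\cdot)$; once this is granted, the rest is routine bookkeeping of the open sets $U_m$ supplied by Proposition~\ref{P:gstv}.
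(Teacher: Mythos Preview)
Your argument is correct and follows the same strategy as the paper: apply Proposition~\ref{P:gstv} in each degree $m$ to obtain dense opens $U_m\subset\GL(V)$, intersect finitely many of them, and then invoke Sturmfels' identification of $GF(J)$ with the normal fan of $St(J)=\sum_{m=2}^{D}St_m(J)$. You are in fact more careful than the paper on the one subtle point, namely the uniformity in $g$ of the truncation degree $D$: the paper simply fixes $D=D(I)$ and asserts that $St(g.I)$ is constant on $\bigcap_{m\le D}U_m$ without addressing why the a~priori $g$-dependent quantity $D(g.I)$ may be replaced by a single $D$, whereas your observation that the normal fan of $\sum_{m=2}^{N}St_m(J)$ stabilises for all $N\ge D(J)$, combined with a uniform bound on $D(g.I)$ (available since $g.I$ has the same generating degrees as $I$ and Gr\"obner-basis degree bounds depend only on these and $n$), fills this gap cleanly.
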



The Gr\"obner fan $GF(g.I)$, $g \in U$, is called  the {\it generic Gr\"obner fan} of $I$. The existence of the generic Gr\"obner fan is proved in \cite{Romer}. Roughly speaking, they construct a universal Gr\"obner basis $\{h_1(y_{ij}),\dots,h_s(y_{ij})\}$ in $k'[x_1,\dots,x_n]$ using Buchberger algorithm, where $k' = k(y_{ij} : i,j=1,\dots,n)$, in such a way that substituting $g = (g_{ij}) \in U \subseteq \GL(V)$ for $(y_{ij})$ gives a universal Gr\"obner basis of $(g.I)$ with the same support for all $g \in U$. Here, the dense Zariski open subset $U$ is determined by the non-vanishing locus of the numerators and the denominators of the polynomials of $y_{ij}$ occurring in the calculations of the algorithm.

\section{Search for Kempf's worst one parameter subgroups}\label{S:algorithm}

We first recall Mumford's numerical criterion: Let a reductive group $G$ linearly act on a projective variety $X \subset \bP(V)$ via a representation $\phi : G \to \GL(V)$. Then $x \in X$ is stable (resp. semistable) if and only if for every 1-PS $\rho$ of $G$,  the \emph{Hilbert-Mumford index} $\mu(x,\rho)$ is positive (resp. non-negative), where $\mu(x,\rho)$ is defined  as the character with which $G$ acts on the fibre $\cO_X(1)|_{x^*}$ over the fixed point $x^* = \lim_{t\to 0} \rho(t).x$. In terms of the homogeneous coordinates $\{x_0,\dots,x_n\}$ diagonalizing the $\rho$-action, i.e. $\rho(t).x_i = t^{r_i}x_i$,
\[
\mu(x,\rho) = \max\{-r_i \, | \, x_i \ne 0\}.
\]
Let $R$ be a maximal torus of $G$, $T$ a maximal torus of $\GL(V)$ containing $\phi(R)$, and assume that $T$ is diagonalized by $x_i$. Let $\chi_i \in X(T)$  be the character picking off the $i$th factor, and by abusing notation we let $\chi_i$ denote the induced character of $R$. Take an affine point $v\in V$ over $x$ and identify $\{i \, | \, x_i \ne 0\}$ with the state $\Xi_v(T)$. Then the Hilbert-Mumford index $\mu(x,\rho)$ equals
\[
\max \{-\langle\chi,\rho\rangle \, | \, \chi \in \Xi_v(T) \} = - \min\{\langle\chi, \rho\rangle \, | \, \chi \in \Xi_v(R)\}.
\]
Let $\Gamma(R)$ be the group of 1-PS of $R$. Fix a positive definite integral bilinear form on $\Gamma(R)$ that is invariant under the Weyl group of $G$ with respect to $R$.  The Killing form is an example. Let  $||\cdot ||$ be the corresponding length function on  $\Gamma(G)$. Kempf proved that if $v$ is unstable, there exists a 1-PS $\rho_o$ such that
\[
\mu(v,\rho_o)/||\rho_o|| \le \mu(v,\rho)/||\rho||
\]
for any nontrivial $\rho \in \Gamma(G)$ (a conjecture of Mumford).

To demonstrate the relation between the numerical criterion and the state polytope, we briefly review a duality theorem which can be proven by elementary means. This is well known in approximation theory. Let $\bE$ be a finite dimensional vector space and $(\, , \,)$ be a positive definite real valued  symmetric bilinear form on $\bE$. Let $\Xi = \{ \chi_1, \dots, \chi_s\}$ be a finite subset of $\bE$ and $P$ denote its convex hull. Let $|\chi| = (\chi,\chi)^{1/2}$ denote the associated norm and $|\cdot|^*$, the dual norm on $\bE^*$: $|\rho|^* = \max\{\langle\chi,\rho\rangle \, | \, |\chi|\le 1\}$. Choose a suitable basis (and its dual) with respect to which the pairing $\langle\chi,\rho\rangle$ is the matrix multiplication $\chi^T\rho$. We also use the basis to fix an isomorphism between $\bE$ and $\bE^*$. The duality is between the following two problems:
\begin{enumerate}
	\item[(A)] Find $\min\{|\chi| \, | \, \chi \in P\}$;
\item[(B)] Find $\max\{g(\rho) \,  | \, |\rho|^* \le 1\}$, where $g(\rho) = \min_{\chi\in \Xi} \langle\chi, \rho\rangle$.
\end{enumerate}
That is, we have:

\begin{thm} \label{T:nearest}  Suppose that $\{\rho \, | \, g(\rho) \ge 0\}$ is nonempty.
If $\chi_o$ is the solution for the first problem and $\chi_o \ne 0$, then the solution for the second is given by $\chi_o/|\chi_o|$ (via the chosen isomorphism $\bE \simeq \bE^*$).
\end{thm}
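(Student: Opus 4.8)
\emph{The plan.} This is the classical duality between the distance from the origin to a convex body and a support-type optimization over the dual unit ball; I would prove it directly from the variational characterization of the nearest-point projection onto a convex set, without invoking a minimax theorem. Two preparatory remarks set the stage. First, since a linear function attains its minimum over the polytope $P$ at a vertex, one may rewrite $g(\rho)=\min_{\chi\in\Xi}\langle\chi,\rho\rangle=\min_{\chi\in P}\langle\chi,\rho\rangle$. Second, because the identification $\bE\simeq\bE^{*}$ is the one induced by the positive definite form $(\,,\,)$, the dual norm $|\cdot|^{*}$ agrees with $|\cdot|$ under this identification: by Cauchy--Schwarz $\langle\chi,\rho\rangle=(\chi,\rho)\le|\chi|\,|\rho|$ with equality at $\chi=\rho/|\rho|$, whence $|\rho|^{*}=|\rho|$. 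In particular $\langle\chi,\rho\rangle\le|\chi|\,|\rho|^{*}$ for all $\chi,\rho\in\bE$, and the minimum in (A) is attained at some $\chi_{o}\in P$ by compactness; write $d=|\chi_{o}|$.

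\emph{The key step.} The geometric heart of the argument is the variational inequality
\[
(\chi-\chi_{o},\chi_{o})\ge 0\qquad\text{for all }\chi\in P.
\]
If some $\chi\in P$ violated it, then by convexity the segment $\chi_{o}+t(\chi-\chi_{o})$, $t\in[0,1]$, lies in $P$, and expanding $|\chi_{o}+t(\chi-\chi_{o})|^{2}=d^{2}+2t(\chi-\chi_{o},\chi_{o})+t^{2}|\chi-\chi_{o}|^{2}$ shows the distance to the origin strictly decreases for small $t>0$, contradicting the minimality of $\chi_{o}$. Consequently $(\chi,\chi_{o})\ge(\chi_{o},\chi_{o})=d^{2}$ for every $\chi\in P$, hence for every $\chi\in\Xi$.

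\emph{Conclusion.} Assuming $\chi_{o}\ne 0$, I would set $\rho_{o}:=\chi_{o}/d$, viewed in $\bE^{*}$ through the chosen isomorphism. Then $|\rho_{o}|^{*}=|\rho_{o}|=1$, so $\rho_{o}$ is feasible for (B), and the variational inequality gives $g(\rho_{o})=\min_{\chi\in P}(\chi,\chi_{o})/d\ge d$, while plugging in $\chi=\chi_{o}\in P$ yields $g(\rho_{o})\le d$; thus $g(\rho_{o})=d$. Conversely, for every $\rho$ with $|\rho|^{*}\le 1$ one has $g(\rho)=\min_{\chi\in P}\langle\chi,\rho\rangle\le\langle\chi_{o},\rho\rangle\le|\chi_{o}|\,|\rho|^{*}\le d$, so no competitor beats $\rho_{o}$. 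Hence $\rho_{o}$ solves (B), and the common optimal value $d=|\chi_{o}|$ of (A) and (B) --- the duality itself --- drops out for free. (The hypothesis that $\{\rho\,:\,g(\rho)\ge 0\}$ be nonempty is in fact automatic, as it always contains $\rho=0$; the genuine dichotomy is whether $\chi_{o}=0$, i.e.\ whether the origin already lies in $P$, which is exactly the case excluded by the assumption $\chi_{o}\ne 0$.)

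\emph{Where the difficulty lies.} There is no substantial obstacle: the entire proof reduces to the elementary projection inequality. The only points demanding care are the self-duality $|\cdot|^{*}=|\cdot|$ under the inner-product identification $\bE\simeq\bE^{*}$, and the fact that the nearest point $\chi_{o}$ need not be a vertex of $P$ nor an element of $\Xi$ --- only $\chi_{o}\in P$ is used in the argument.
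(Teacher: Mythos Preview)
Your proof is correct. The paper itself does not give a proof of this theorem at all: immediately after the statement it simply writes ``For a proof of this theorem, see, for instance, \cite{Wolfe},'' and moves on. So there is no in-paper argument to compare against; your direct argument via the variational inequality for the nearest-point projection is the standard elementary route and is entirely self-contained, which is arguably an improvement over deferring to an external reference.
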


For a proof of this theorem, see,  for instance, \cite{Wolfe}.  Back to the discussion on the worst 1-PS.
Here, $\bE = X(R)\otimes_\bZ\bR$, $\bE^* = \Gamma(R)\otimes_\bZ\bR$ and an isomorphism between them is given by the nondegenerate invariant inner product (for instance, again, the Killing form) on $\bE$. Set $\Xi = \Xi_v(R)$. In this case, the functional $g(\rho)$ on $\Gamma(R)_\bR$ is precisely  $\mu(v, \rho)$ and the second problem (B) amounts to finding a worst 1-PS of $R$. By Theorem~\ref{T:nearest}, this is equivalent to finding the point in the state polytope $\stp_v(R)$ closest to the origin. By now there are various algorithms for finding the nearest point, and MatLab has an implementation of it, for instance. To find a worst 1-PS of $G$, one has to take into consideration the state polytopes $\stp_v(R)$ for every maximal torus $R$, and this is where our generic state polytopes may play a role. In short, the strategy is to compute all generic state polytopes $\gstp_{R,S}(v)$ and compute the nearest point for each of them. We shall prove in a later section that the primary generic state polytope always contains the origin (Theorem~\ref{T:generic-stability}).

\begin{enumerate}
\item For each nonempty $S \subset P(V)$, find $g_S$ such that $h_{\chi,\a}^v(g_S)$ is zero for all $\chi \in S$ and nonzero otherwise. This is in general computationally extremely difficult. It is also the one with most room for exploration by using, for instance, representation theoretic approaches;
\item Compute the nearest point $\rho_S$ of $\stp_{g_S.v}(R)$ for each $S$;
\item Find the farthest point $\rho^{\star}$ among $\rho_S$'s. The length of $\rho^{\star}$ is precisely the negative of the value of the worst Hilbert-Mumford index.
\end{enumerate}

\section{Finiteness of destabilizing 1-PS}
Let $G$ and $V$ be as before and let $v\in V$ be an unstable point. If $G$ is semisimple,  a general maximal torus never contains a destabilizing 1-PS by Theorem~\ref{T:generic-stability}. By the Hilbert-Mumford criterion, there exists a 1-PS $\rho$ of a maximal torus $R \subset G$ which pairs positively with every character $\chi$ of $R$ such that $v_\chi \ne 0$. That is, the intersection $C_v(R)$ of all half spaces $\{\rho \in \Gamma(R)_\bR \, | \, \langle \chi, \rho \rangle > 0\}$, $\forall \chi \in \Xi_v(R)$, is nonempty. Since $\Xi_v(R)$ is finite,  $C_v(R)$ is a rational polyhedral cone. Hence
\begin{lem}
The set of 1-PS of $R$ that destabilizes $v$ is $C_v(R)\cap \Gamma(R)$, which is generated by a finite set $D_v(R) \subset X(R)$.
\end{lem}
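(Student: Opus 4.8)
The statement is essentially a bookkeeping consequence of the numerical criterion together with the finiteness built into the notion of a GIT state, so the plan is short.

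First I would translate ``destabilizes'' into the language of characters. By the Hilbert--Mumford criterion recalled above, a 1-PS $\rho\in\Gamma(R)$ destabilizes $v$ precisely when $\mu(v,\rho)<0$; and by the formula $\mu(v,\rho)=-\min\{\langle\chi,\rho\rangle\mid\chi\in\Xi_v(R)\}$ this happens if and only if $\langle\chi,\rho\rangle>0$ for every $\chi\in\Xi_v(R)$, i.e. if and only if $\rho\in C_v(R)$. (The one point to watch is the sign convention, which is where errors tend to creep in.) Hence the destabilizing 1-PS of $R$ are exactly the lattice points $C_v(R)\cap\Gamma(R)$. By construction $R$ is chosen so that $C_v(R)\neq\emptyset$; when $G$ is semisimple, Theorem~\ref{T:generic-stability} forces such an $R$ to be a non-generic torus, which is the point of this section.

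Next I would invoke finiteness. The set $\Xi_v(R)$ is finite, being a subset of the finite weight set $P(V)$, so $C_v(R)=\bigcap_{\chi\in\Xi_v(R)}\{\rho\mid\langle\chi,\rho\rangle>0\}$ is a finite intersection of open rational half-spaces and its closure $\overline{C_v(R)}$ is a rational polyhedral cone in $\Gamma(R)_\bR$; under the chosen inner-product identification $\Gamma(R)_\bR\simeq X(R)_\bR$ this is where the ``$\subset X(R)$'' in the statement comes from. By Gordan's lemma the monoid $\overline{C_v(R)}\cap\Gamma(R)$ is finitely generated, and since $C_v(R)\neq\emptyset$ one has $C_v(R)\cap\Gamma(R)=\mathrm{relint}\bigl(\overline{C_v(R)}\bigr)\cap\Gamma(R)$, which is a finitely generated module over that monoid --- a standard fact from the theory of rational cones. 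Taking $D_v(R)$ to be the finite union of a set of monoid generators of $\overline{C_v(R)}\cap\Gamma(R)$ and a set of such module generators, every destabilizing 1-PS is a nonnegative integral combination of elements of $D_v(R)$, which is the asserted finite generation. (If one prefers to read ``destabilizes'' as ``renders $v$ non-stable'', i.e. $\mu(v,\rho)\le 0$, the relevant set is the full cone $\overline{C_v(R)}\cap\Gamma(R)$ and plain Gordan's lemma already supplies $D_v(R)$.)

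There is no real obstacle here: the content is just the combination of the numerical criterion, the built-in finiteness of $\Xi_v(R)$, and Gordan's lemma. The only place demanding a little care is the passage from the open cone $C_v(R)$ to its rational polyhedral closure and the precise sense in which its interior lattice points are ``generated by a finite set'' --- a module, rather than a monoid, statement --- together with, as always in this circle of ideas, keeping the signs straight.
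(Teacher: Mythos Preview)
Your proposal is correct and follows essentially the same route as the paper: the paper's argument is just the one sentence ``Since $\Xi_v(R)$ is finite, $C_v(R)$ is a rational polyhedral cone,'' followed immediately by the lemma, with Gordan's lemma left implicit. If anything you are more careful than the paper, since you explicitly distinguish the open cone $C_v(R)$ from its rational polyhedral closure and explain in what sense the interior lattice points are finitely generated; the paper simply calls $C_v(R)$ itself a rational polyhedral cone and moves on.
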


\begin{lem}\label{L:destab-1ps}
$D_v(g^{-1}Rg) = (g^!)^{-1}D_{g.v}(R)$
\end{lem}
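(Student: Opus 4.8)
The plan is to check that conjugation by $g$ carries every ingredient in the construction of $D_v(R)$ over to the corresponding ingredient for $g.v$: the state $\Xi_v(R)$, the Hilbert--Mumford pairing, the destabilizing cone $C_v(R)$, and its distinguished finite generating set. Write $R' = g^{-1}Rg$. Conjugation by $g$ induces an isomorphism $g_!\colon X(R') \isomto X(R)$ on characters, as in the introduction, together with a dual isomorphism $g^!\colon \Gamma(R') \isomto \Gamma(R)$ on one-parameter subgroups, the two being adjoint in the sense that $\langle \chi', \rho'\rangle = \langle g_!\chi', g^!\rho'\rangle$ for all $\chi' \in X(R')$ and $\rho' \in \Gamma(R')$.

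The first step is the compatibility of states, $g_!(\Xi_v(R')) = \Xi_{g.v}(R)$ (this is the identity already invoked in the boundedness discussion of the introduction). Concretely, for $t' \in R'$ one writes $t'.v = g^{-1}.\big((g t' g^{-1}).(g.v)\big)$; decomposing $g.v$ into $R$-weight spaces and pulling $g^{-1}$ back through shows that the $R'$-weight component of $v$ indexed by $g_!^{-1}\chi$ equals $g^{-1}.(g.v)_\chi$, while all other $R'$-weight components of $v$ vanish. Since $w \mapsto g^{-1}.w$ is a linear automorphism of $V$, the weights occurring in $v$ relative to $R'$ are exactly the $g_!^{-1}$-images of the weights occurring in $g.v$ relative to $R$, which is the claim.

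Next I would transport the cone. By definition $\rho' \in C_v(R')$ means $\langle\chi',\rho'\rangle > 0$ for every $\chi' \in \Xi_v(R')$; by the first step and the adjunction between $g_!$ and $g^!$ this is equivalent to $\langle\chi, g^!\rho'\rangle > 0$ for every $\chi \in \Xi_{g.v}(R)$, i.e. to $g^!\rho' \in C_{g.v}(R)$. Hence $g^!$ restricts to an isomorphism $C_v(R') \isomto C_{g.v}(R)$, and, being simultaneously an isomorphism of the lattices $\Gamma(R') \isomto \Gamma(R)$, it restricts further to an isomorphism $C_v(R')\cap\Gamma(R') \isomto C_{g.v}(R)\cap\Gamma(R)$ of the sets of destabilizing $1$-PS. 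Taking $D_v(R)$ to be the distinguished (minimal) generating set furnished by the preceding lemma, and noting that an isomorphism carries such a canonically defined generating set onto the corresponding one, we obtain $g^!(D_v(R')) = D_{g.v}(R)$, that is, $D_v(g^{-1}Rg) = (g^!)^{-1}D_{g.v}(R)$.

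The argument is essentially bookkeeping, and I expect no genuine obstacle. The two points that do need attention are: keeping the conjugation conventions consistent, so that the adjunction between $g_!$ and $g^!$ has the correct variance and the state identity of the first step really reads $g_!(\Xi_v(R')) = \Xi_{g.v}(R)$ rather than the analogous statement for $g^{-1}.v$; and making $D_v(R)$ a \emph{canonical} function of the pair $(v,R)$ — for instance the unique minimal generating set of $C_v(R)\cap\Gamma(R)$ — so that the conclusion is an honest equality of sets and not merely ``some generating set maps to some generating set''.
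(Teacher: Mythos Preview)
Your approach is essentially the same as the paper's: both hinge on the basic relation $g.v_\chi = (g.v)_{g_!\chi}$ (which yields $g_!(\Xi_v(R')) = \Xi_{g.v}(R)$) together with the adjunction $\langle \chi', \rho'\rangle = \langle g_!\chi', g^!\rho'\rangle$, and from these conclude that $g^!$ carries destabilizing $1$-PS for $v$ in $R'$ to destabilizing $1$-PS for $g.v$ in $R$. Your write-up is in fact more complete than the paper's, which only records one implication and does not explicitly discuss why the distinguished generating sets $D_v(\cdot)$ correspond; your final remark about needing $D_v(R)$ to be a canonical function of $(v,R)$ is a genuine point the paper leaves implicit.
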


\begin{proof} Suppose that $\rho \in \Gamma(g^{-1}Rg)$ destabilizes $v$ i.e.,  $\langle \chi, \rho \rangle > 0 $ for all $\chi \in \Xi_v(g^{-1}Rg)$. From the basic relation (see Section~\ref{S:generic-semistability}, Equation~(\ref{E:basic-reln}))
\[
g.v_\chi = (g.v)_{g_!\chi}, \quad \forall \chi \in X(g^{-1}Rg), \quad g_! : X(g^{-1}Rg) \stackrel{\sim}{\to} X(R)
\]
and the obvious $G$-invariance of the pairing
\[
\langle \chi, \rho \rangle = \langle g_!\chi, g^!\rho \rangle, \quad  \forall \rho \in \Gamma(g^{-1}Rg), g^! : \Gamma(g^{-1}Rg) \stackrel{\sim}{\to} \Gamma(R)
\]
we see that if $\rho \in \Gamma(g^{-1}Rg)$ destabilizes $v$ (i.e. $\langle \chi, \rho \rangle > 0$ for some $\chi \in \Xi_v(g^{-1}Rg)$) then $g^!\rho \in \Gamma(R)$ destabilizes $g.v$.
\end{proof}
In the discussion preceding this section, we have described how one can compute a finite set  $\{g_1, \dots, g_s\} \subset G$ that  generates all state polytopes of $v$ i.e.
\[
\bigcup_{g\in G}g_*\Xi_v(g^{-1}Rg) = \bigcup_{g\in G}\st_{g.v}(R) = \bigcup_{i=1}^s \st_{g_i.v}(R).
\]
Combining this with the Lemma~\ref{L:destab-1ps} above, we see that
\begin{prop}\label{P:destab-1ps}
The set of {\it all} destabilizing 1-PS of $v$ is generated by the finite set
\[
\coprod_{i=1}^s (g_i^!)^{-1}D_{g_i.v}(R) \subset \Gamma(G).
\]
\end{prop}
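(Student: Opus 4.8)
The plan is to assemble Proposition~\ref{P:destab-1ps} directly from the three ingredients already in place: the covering of all state polytopes by the finitely many $\st_{g_i.v}(R)$, the cone description of the destabilizing $1$-PS of a fixed torus from the first Lemma of this section, and the conjugation-compatibility of Lemma~\ref{L:destab-1ps}. The starting point is the observation that a $1$-PS $\rho$ of $G$ destabilizes $v$ if and only if, writing $\rho$ as a $1$-PS of \emph{some} maximal torus $R' = g^{-1}Rg$ (every $1$-PS lies in a maximal torus, and all maximal tori are $G$-conjugate), we have $\langle\chi,\rho\rangle \le 0$ for some $\chi \in \Xi_v(R')$ (using here that $\mu(v,\rho) = -\min\{\langle\chi,\rho\rangle : \chi\in\Xi_v(R')\} \ge 0$ is exactly the failure of stability, and $>0$ for instability). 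So the set of all destabilizing $1$-PS of $v$ is $\bigcup_{g\in G} (g^!)^{-1}\bigl(C_{g.v}(R)\cap\Gamma(R)\bigr)$ — equivalently $\bigcup_{g\in G}(g^!)^{-1}D_{g.v}(R)$ after passing to generators — where the union ranges over coset representatives for maximal tori, i.e. over all of $G$.

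Next I would cut this infinite union down to a finite one. By the discussion preceding the section, the $1$-PS we care about see $v$ only through the state $\Xi_{g.v}(R)$, and there are only finitely many such states as $g$ ranges over $G$: precisely the $\Xi_{g_i.v}(R)$, $i=1,\dots,s$, arising from the finite stratification of Proposition~\ref{P:gstv}. If $g$ lies in the same stratum as $g_i$, then $\Xi_{g.v}(R) = \Xi_{g_i.v}(R)$, hence $C_{g.v}(R) = C_{g_i.v}(R)$ and $D_{g.v}(R) = D_{g_i.v}(R)$ as subsets of $\Gamma(R)$. Therefore
\[
\bigcup_{g\in G}(g^!)^{-1}D_{g.v}(R) \;=\; \bigcup_{i=1}^s\bigcup_{g\in U^v_{S_i}} (g^!)^{-1}D_{g_i.v}(R).
\]
The remaining subtlety is that even though $D_{g.v}(R)$ is constant on a stratum, the conjugation isomorphism $g^! : \Gamma(g^{-1}Rg)\to\Gamma(R)$ — or rather its inverse, which is what transports the cone back to the torus actually containing $\rho$ — varies with $g$. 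Here Lemma~\ref{L:destab-1ps} is exactly the identity needed: $D_v(g^{-1}Rg) = (g^!)^{-1}D_{g.v}(R)$ says that the destabilizing set inside the torus $g^{-1}Rg$ is the pullback of the (stratum-constant) set $D_{g_i.v}(R)$. Running $g$ over a fixed stratum sweeps out destabilizing $1$-PS in all the tori $g^{-1}Rg$, and since conjugation acts transitively on maximal tori, letting $i$ also vary covers every maximal torus and hence every $1$-PS of $G$. Collecting the finitely many generating sets $(g_i^!)^{-1}D_{g_i.v}(R)$ gives a finite subset of $\Gamma(G)$ that generates all destabilizing $1$-PS, which is the assertion.

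The one place that requires care — and what I expect to be the main obstacle — is making precise the sense in which the finite set $\coprod_{i=1}^s(g_i^!)^{-1}D_{g_i.v}(R)$ ``generates'' the destabilizing $1$-PS: the set of destabilizing $1$-PS of $v$ is not itself a cone in a single lattice (it is spread across all maximal tori), so ``generated by'' must mean that every destabilizing $\rho$ is, up to the conjugation identification $\Gamma(g^{-1}Rg)\isomto\Gamma(R)$ sending $\rho$ into $\Gamma(R)$, a nonnegative integer combination of the elements of one of the $D_{g_i.v}(R)$, together with the fact that each such combination, pulled back via $(g^!)^{-1}$ to the appropriate conjugate torus, does destabilize $v$. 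The forward direction (every destabilizer arises this way) follows from the argument above; the reverse — that every element of the listed set and every nonnegative combination thereof genuinely destabilizes — follows because $C_{g_i.v}(R)$ is by construction contained in the locus where $\langle\chi,\cdot\rangle>0$ for all $\chi\in\Xi_{g_i.v}(R)$, so $\mu(g_i.v,\cdot)<0$ there, and $G$-invariance of the Hilbert--Mumford index transports this back to $v$. Once the statement is phrased this way, the proof is essentially the three-line combination of the section's lemmas indicated above.
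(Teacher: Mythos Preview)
Your approach is essentially the paper's own: the paper simply says ``combining this with Lemma~\ref{L:destab-1ps}'' after recalling that the finite stratification of Proposition~\ref{P:gstv} yields finitely many states $\Xi_{g_i.v}(R)$, and you have spelled out exactly that combination in more detail. One correction to your first paragraph: the sign bookkeeping is inverted. A $1$-PS $\rho$ destabilizes $v$ when $\mu(v,\rho) < 0$, i.e.\ $-\min_{\chi\in\Xi_v(R')}\langle\chi,\rho\rangle < 0$, which means $\langle\chi,\rho\rangle > 0$ for \emph{every} $\chi\in\Xi_v(R')$ (this is precisely the definition of $C_v(R')$ in the first Lemma of the section), not ``$\le 0$ for some $\chi$''. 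With that fixed, the remainder of your argument --- constancy of $C_{g.v}(R)$ and $D_{g.v}(R)$ on strata, transport via $(g^!)^{-1}$ --- goes through unchanged and matches the paper.
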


\section{Generic semistability}\label{S:generic-semistability} 

Let $G$ be a reductive algebraic group and $\rho: G \to \GL(V)$ be a rational representation.

\begin{defn} A point $v \in V$ is said to be {\it generically semistable (stable)} if the (interior of the) primary generic state polytope  $\gstp_v(R)$ contains the trivial character.
\end{defn}

In other words, generic semistability means $R'$-semistability for general maximal tori $R'$. When $G$ is not reductive, we don't have the numerical criterion and the  $R$-semistability for all tori $R$ does not imply $G$-semistability, so we restrict ourselves to the case when $G$ is reductive.

Even in a worst GIT problem in which every point is unstable, one needs to choose a maximal torus carefully to produce a destabilizing 1-ps:

\begin{exmp} (Hyperplanes of $\bP^n$) Let $G = \SL_{n+1}(\bC)$ naturally act on the space $\bP H^0(\cO_{\bP^n}(1))$ of hyperplanes of $\bP^n$. In this case, the semistability with respect to generic maximal tori is equivalent to the semistability in generic coordinates.

A hyperplane $H$ in generic coordinates $x_0,\dots,x_n$  is $\sum_{i=0}^n a_i x_i$ with $a_i \ne 0$ for any $i$, and is stable with respect to the maximal torus of $G$ diagonalized by $\{x_0,\dots, x_n\}$. With respect to a coordinate system such that $H = \{y_0 = 0\}$, $H$ is destabilized by the obvious 1-ps with weights $(n, -1, -1, \dots, -1)$.
\end{exmp}

In this section, we prove that in {\it any} GIT problem i.e. for any connected complex semisimple group $G$ and a rational representation $V$ of $G$ and any nonzero point $v \in V$, a general torus does not contain a destabilizing 1-ps for $v$. The key point is the well known fact that for any complex semisimple Lie algebra and its representation, there is a sufficient symmetry of the weights.

We shall use the Lie algebra $\mathfrak g$ of $G$, and the correspondence between $\mathfrak g$-modules and $G$-modules.
Let $G$ be a connected complex semisimple algebraic group with Lie algebra $\mathfrak g$, fix a maximal torus $R$ and let $\mathfrak r$ denote its Lie algebra. The adjoint representation $Ad : G \to \GL(\mathfrak g)$ induces the weight space decomposition
\[
\mathfrak g = \mathfrak r \oplus \bigoplus_{\a\in \Phi}\mathfrak g_\a.
\]
Then it is well known that
\begin{enumerate}
\item $\mathfrak g$ is a semisimple Lie algebra;
\item $(X(R)_\bR, \Phi, X(R)^\vee_\bR, \Phi^\vee)$ is a root system, where $X(R)^\vee$ is the group of co-characters and $\Phi^\vee$ is the set of co-roots with respect to the natural pairing $\langle \quad, \quad \rangle$ of characters and co-characters;
\item The root system above is precisely the root system of the semisimple Lie algebra $\mathfrak g$ with respect to the Cartan subalgebra $\mathfrak t$, and;
\item $\weyl : = N_G(R)/R$ is finite, acts on $X(R)_\bR$ by conjugation and is naturally identified with the Weyl group  of the root system.
\end{enumerate}

Let   $\phi: \mathfrak g \to \mathfrak gl(V)$ denote the Lie algebra representation induced by $\rho: G \to \GL(V)$.
The weight space decomposition $V = \oplus_{\chi \in X(R)} V_\chi$ coincides with the decomposition of $V = \oplus_{d\chi \in \mathfrak t^*} V_{d\chi}$ of the weight decomposition of $V$ as a Lie algebra representation of $\mathfrak g$ i.e. $V_\chi = V_{d\chi}$.
We recall a few fundamental results from the representation theory of Lie algebras (see \cite{Varadarajan}, eg):

\begin{thm} The Weyl group $\weyl$ acts on the set of weights of $V$, and simply transitively on the set of the Weyl chambers.
\end{thm}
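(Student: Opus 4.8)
The plan is to handle the two assertions separately, since they are of quite different natures: the first is a formal consequence of the way $N_G(R)$ moves weight spaces, needing neither semisimplicity nor the root system, while the second is the classical statement about root systems supplied by items (2)--(4). For the first, I would pick $n \in N_G(R)$ representing $w \in \weyl = N_G(R)/R$ and compute, for $v \in V_\chi$ and $t \in R$,
\[
t\cdot(n\cdot v) = n\cdot\big((n^{-1}tn)\cdot v\big) = n\cdot\big(\chi(n^{-1}tn)\,v\big) = \chi\big(n^{-1}tn\big)\,(n\cdot v),
\]
so that $n\cdot V_\chi = V_{w\chi}$, where $w\chi$ is the character $t\mapsto \chi(n^{-1}tn)$, i.e. precisely the conjugation action of item (4) (with the convention for the $\weyl$-action as in item (4)). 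Since $n$ is invertible, $V_\chi \ne 0$ iff $V_{w\chi}\ne 0$, and since $\weyl$ already preserves $X(R)$, it preserves the finite set $\{\chi \in X(R) : V_\chi \ne 0\}$ of weights of $V$.

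For the second assertion I would argue purely inside the root system $(X(R)_\bR,\Phi,X(R)^\vee_\bR,\Phi^\vee)$ of items (2)--(4), the Weyl chambers being the connected components of $X(R)_\bR \setminus \bigcup_{\a\in\Phi}H_\a$, with $H_\a$ the hyperplane orthogonal to $\a$ for a $\weyl$-invariant inner product. \emph{Transitivity}: induct on the number $d(C,C')$ of hyperplanes $H_\a$ separating two chambers $C,C'$. If $d(C,C')=0$ then $C = C'$ by convexity; otherwise pick a wall $H_\a$ of $C$ separating $C$ from $C'$, observe $d(s_\a C, C') = d(C,C') - 1$, and by the inductive hypothesis get $w'$ with $w'(s_\a C) = C'$, so $w = w's_\a$ works. \emph{Freeness}: fix the positive system $\Phi^+$ cut out by a chamber $C$ and recall $\ell(w) = \#\{\b \in \Phi^+ : w\b \in -\Phi^+\}$; if $w \ne e$ then $\ell(w)\ge 1$, so $w\b \in -\Phi^+$ for some $\b \in \Phi^+$. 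Taking $x \in C$, so $\langle x,\gamma^\vee\rangle > 0$ for all $\gamma \in \Phi^+$, $\weyl$-invariance of the pairing gives $\langle wx,(w\b)^\vee\rangle = \langle x,\b^\vee\rangle > 0$, whereas $-w\b \in \Phi^+$ would force $\langle wx,(w\b)^\vee\rangle < 0$ if $wx$ lay in $C$; hence $wC \ne C$, so $wC = C$ forces $w = e$.

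The one genuinely nontrivial input, and the step I would cite from \cite{Varadarajan} rather than reprove, is the identity $\ell(w) = \#\{\b \in \Phi^+ : w\b \in -\Phi^+\}$ --- equivalently, that the closed dominant chamber is a strict fundamental domain for $\weyl$ --- which is the combinatorial heart of the freeness assertion. Everything else is either the one-line weight-space computation above or the elementary geometric induction for transitivity, so the main obstacle is simply importing that one standard fact about Weyl (Coxeter) groups.
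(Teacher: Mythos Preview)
The paper does not prove this theorem at all: it is stated as one of the ``fundamental results from the representation theory of Lie algebras'' and simply referred to \cite{Varadarajan}. So there is no proof in the paper to compare against; your proposal supplies strictly more than the paper does.

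Your argument is correct. The weight-space computation in your first paragraph is exactly the identity the paper itself records later as Equation~(\ref{E:basic-reln}) in the proof of Theorem~\ref{T:generic-stability}, so your treatment of part (1) is fully in line with the paper's conventions. For part (2), the induction on the number of separating hyperplanes for transitivity and the length-function argument for freeness are the standard route (as in Bourbaki or \cite{Varadarajan}); your decision to cite the identity $\ell(w) = \#\{\b \in \Phi^+ : w\b \in -\Phi^+\}$ rather than reprove it is appropriate and matches the paper's own stance of importing this material wholesale.
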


The following is immediate from the theorem above:

\begin{coro}\label{C:Weyl} The convex hull in $X(R)_\bR$ of the $\weyl$-orbit of a  nonzero weight of $V$ contains the origin.
\end{coro}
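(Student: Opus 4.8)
The plan is to average over the Weyl group. Let $(\,,\,)$ be the Weyl-invariant positive definite form on $X(R)_\bR$ already fixed in the discussion above (the Killing form), let $\chi$ be the given nonzero weight of $V$, and let $\mathcal O=\weyl\cdot\chi$ be its orbit; this is a finite set since $\weyl$ is finite, so $\conv(\mathcal O)$ is a genuine polytope. First I would form the barycenter
\[
\bar\chi:=\frac{1}{\abs{\weyl}}\sum_{w\in\weyl}w.\chi.
\]
By construction $\bar\chi$ is a convex combination of the points of $\mathcal O$, hence $\bar\chi\in\conv(\mathcal O)$; and for any $w'\in\weyl$ one has $w'.\bar\chi=\bar\chi$, since left translation by $w'$ merely permutes the summands. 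So the whole problem reduces to showing $\bar\chi=0$.

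Next I would show that $0$ is the only $\weyl$-fixed vector of $X(R)_\bR$. If $u\in X(R)_\bR$ is fixed by all of $\weyl$, then in particular $s_\a.u=u$ for the reflection $s_\a$ attached to each root $\a\in\Phi$; writing out $s_\a.u=u-2\tfrac{(u,\a)}{(\a,\a)}\a$ forces $(u,\a)=0$ for every $\a\in\Phi$. Since $G$ is semisimple, $\Phi$ spans $X(R)_\bR$, so $u$ is orthogonal to everything and $u=0$. Applying this to $u=\bar\chi$ finishes the proof: $0=\bar\chi\in\conv(\mathcal O)$.

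I do not expect a genuine obstacle here; the one point that must be invoked correctly rather than computed is that for a semisimple group the roots span the character space $X(R)_\bR$ — this is exactly where semisimplicity, and not merely reductivity, is used, and it is why the statement can fail for a general reductive $G$ (e.g. a torus, where $\weyl$ is trivial). Everything else — finiteness of $\weyl$, the action of $\weyl$ on the weights of $V$, and the existence of the invariant form — has already been recorded above. Alternatively, one could avoid averaging and argue by uniqueness of the nearest point of $\conv(\mathcal O)$ to the origin, as in Theorem~\ref{T:nearest}: that nearest point is $\weyl$-fixed, hence $0$.
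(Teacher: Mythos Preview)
Your argument is correct. The paper does not actually write out a proof of this corollary --- it simply declares it ``immediate'' from the preceding theorem on the Weyl group action --- so your averaging argument is exactly the kind of standard verification one supplies here: the barycenter of the orbit is $\weyl$-fixed, and $X(R)_\bR^{\weyl}=0$ because the roots span (semisimplicity), which is the content one extracts from simple transitivity on the Weyl chambers. Your remark pinpointing semisimplicity as the place where the argument would fail for a general reductive group is also on target and consonant with the paper's later reference to \cite{BH}.
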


We recapitulate the following basic facts for readers' convenience. Recall that an algebraic group is said to be {\it almost simple} if it is smooth, connected, non-commutative and every proper normal subgroup of it is finite. An algebraic group $G$ is said to be an {\it almost direct product} of its subgroups $G_1, \dots, G_r$ if the multiplication map $G_1 \times \cdots \times G_r \to G$ is surjective with finite kernel. Then an algebraic group is semisimple if and only if it is an almost direct product of its almost simple subgroups (called its {\it almost simple factors}).

\begin{thm}\label{T:generic-stability} Let $G$ be a semisimple algebraic group and $\rho : G \to \GL(V)$ be a rational representation. Then
\begin{enumerate}
\item any nonzero point $v \in V$ is generically semistable, and;
\item it is generically stable if and only if no non-trivial almost simple factor of $G$ is contained in the isotropy subgroup $G_v$.
\end{enumerate}
\end{thm}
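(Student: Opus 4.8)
The plan is to reduce both parts to a combinatorial statement about the weight polytope of the $G$-submodule $W:=\langle G.v\rangle$ spanned by the orbit of $v$, and then to resolve it using the Weyl group and the decomposition of $G$ into almost simple factors. The first step is to identify $\gstp_v(R)$ with $\conv P(W)$, where $P(W)$ is the set of $R$-weights of $W$: by Definition~\ref{D:gstv}, $\gst_v(R)=P(V)\setminus S^v$ with $S^v=\{\chi\in P(V):(g.v)_\chi=0\ \forall g\}$, and since the weight projection $V\to V_\chi$ is linear it annihilates the orbit $G.v$ exactly when it annihilates its linear span $W$, i.e.\ exactly when $W\cap V_\chi=0$; hence $\gst_v(R)=P(W)$. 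Note $W$ is a nonzero rational $G$-module, completely reducible since $G$ is reductive in characteristic zero.

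For part (1) I would argue with the barycenter of the weights. As $W$ is $G$-stable, $N_G(R)$ permutes the weight spaces of $W$ preserving dimensions, so the multiset of $R$-weights of $W$, each counted with multiplicity, is $\weyl$-invariant; hence its barycenter is a $\weyl$-fixed vector of $X(R)_\bR$, which is $0$ because $G$ semisimple forces the roots to span $X(R)_\bR$. A barycenter of finitely many points is a convex combination with strictly positive weights, so it lies in the relative interior of the convex hull; thus $0$ lies in the relative interior of $\gstp_v(R)=\conv P(W)$, which is generic semistability. (Alternatively one reduces to a nontrivial irreducible summand and quotes Corollary~\ref{C:Weyl}.)

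For part (2), since $0$ already lies in the relative interior of $\conv P(W)$, this polytope contains $0$ in its \emph{topological} interior if and only if it is full dimensional, if and only if $P(W)$ spans $X(R)_\bR$. To compute this span, write $G=G_1\cdots G_r$ as an almost direct product of its almost simple factors, so $X(R)_\bR=\bigoplus_i X(R_i)_\bR$ and $\weyl=\prod_i\weyl_i$ with $R_i\subset G_i$; decompose $W=\bigoplus_k W_k$ into irreducibles, each $W_k$ an external tensor product $W_k^{(1)}\otimes\cdots\otimes W_k^{(r)}$ of irreducible $G_i$-modules $W_k^{(i)}$, so $P(W_k)=\{\sum_i\mu^{(i)}:\mu^{(i)}\in P(W_k^{(i)})\}$. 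Fixing all but the $i$-th coordinate shows $P(W_k)$ contains a translate of $P(W_k^{(i)})$, and combined with the fact (from part (1)) that $0$ lies in $\conv P(W_k)$ and in each $\conv P(W_k^{(i)})$, this makes the span of $P(W_k)$ equal to the sum of the spans of the $P(W_k^{(i)})$; a nontrivial $W_k^{(i)}$ has a nonzero weight whose $\weyl_i$-orbit spans $X(R_i)_\bR$ (the reflection representation of an irreducible Weyl group is irreducible), while a trivial one contributes nothing. Hence $P(W)$ spans $X(R)_\bR$ if and only if every $G_i$ acts nontrivially on $W$. Finally, since $G_i\trianglelefteq G$, the fixed space $W^{G_i}$ is a $G$-submodule, and it contains $v$ precisely when $G_i\subseteq G_v$; so $G_i\subseteq G_v$ iff $W^{G_i}=W$ iff $G_i$ acts trivially on $W$, and generic stability holds iff no nontrivial almost simple factor of $G$ lies in $G_v$.

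The step I expect to be the main obstacle is the span computation in part (2): an individual irreducible summand $W_k$ need not have a full-dimensional weight polytope, so one must verify that the partial spans contributed by the various summands add up to all of $X(R)_\bR$ exactly under the stated isotropy hypothesis. The only input needed beyond the results of the excerpt (in particular Corollary~\ref{C:Weyl}) is the standard fact that the reflection representation of an irreducible Weyl group is irreducible; the identification of $\gstp_v(R)$, the barycenter argument, and the translation of the isotropy condition are all routine.
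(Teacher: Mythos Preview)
Your proof is correct and takes a genuinely different route from the paper's. The crucial move you make up front---identifying $\gst_v(R)$ with the full set of $R$-weights $P(W)$ of the $G$-submodule $W=\langle G.v\rangle$---is not made explicit in the paper, and it allows you to reduce both statements to pure representation theory of $W$. For part (1) the paper instead picks a single $\chi\in\Xi_v(R)$, shows that each $U_{w(\chi)}$ is nonempty by exhibiting a representative of $w^{-1}$ in $N_G(R)$, and then invokes Corollary~\ref{C:Weyl} on the $\weyl$-orbit of $\chi$; your barycenter argument is slicker and immediately yields the stronger conclusion that $0$ lies in the \emph{relative interior} of $\gstp_v(R)$, which is exactly what you need to set up part (2). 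For part (2) the paper argues more geometrically: it first shows (using that maximal tori generate each almost simple $G_i$) that one can choose $R$ so that every $R_i$ acts nontrivially on $v$, and then that for generic $g$ there is a state $\lambda_i$ of $g.v$ nontrivial on $\mathfrak r_i$, whose $\weyl_i$-orbit spans $\mathfrak r_i^*$. You replace this by the decomposition of $W$ into irreducibles and external tensor products and a direct span computation, together with the clean observation that $W^{G_i}$ is a $G$-submodule containing $v$, so $G_i\subseteq G_v$ iff $G_i$ acts trivially on $W$. Your approach is more structural and avoids the auxiliary choice of a special maximal torus; the paper's approach is closer to the ``moving $g$'' philosophy of Section~\ref{S:gst} and uses less about the module structure of $W$. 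Both rely on the same irreducibility of the reflection representation of an irreducible Weyl group.
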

\begin{proof} 1. \,
Let $\chi$ be a weight of $v$ such that $v_\chi \ne 0$.
Define
\[
U_\chi := \{g \in G \, | \, \chi \in \Xi_{g.v}(R)\}.
\]
By definition it equals $ \{g \in G \, | \, (g.v)_\chi \ne 0\}$, and hence is  an open subvariety of $G$ (Section~\ref{S:gst}).
Recall the isomorphism
\[
g_! : X(g^{-1}Rg) \to X(R)
\]
given by $(g_!\chi)(t) = \chi(g^{-1}t g)$. For $g \in N_G(R)$, $g_!$ gives an automorphism of $X(R)$, and we have
\begin{equation}\label{E:basic-reln}
t.(g.v_\chi) = g.(g^{-1}tg).v_\chi = (g_!\chi)(t) g.v_\chi  \tag{$\dagger\dagger$}
\end{equation}
for any $t\in R$ and $g \in N_G(R)$. That is, $g.v_\chi = (g.v)_{g_!\chi}$ and $\chi \in \Xi_v(R)$ if and only if $g_!\chi \in \Xi_{g.v}(R)$.

Let $w \in \weyl = N_G(R)/R$ and $g \in N_G(R)$ be a point over $w$. If $\chi \in \Xi_v(R)$ then $w(\chi) = g_!\chi \in \Xi_{g.v}(R)$. So for any  $w \in \weyl$ and a state $\chi \in \Xi_v(R)$, $U_{w(\chi)}$ contains a representative in $N_G(R)$ of $w^{-1}$ and hence is a nonempty open subvariety of $G$. Set $U = \bigcap_{w \in W}U_{w(\chi)}$. For any $g \in U$, it follows that $(g.v)_{w(\chi)} \ne 0$ for all $w \in \weyl$, and the $\Xi_{g.v}(R)$ contains a $\weyl$-orbit. By Corollary~\ref{C:Weyl}, the generic state polytope of $v$ contains the origin.

\

\noindent 2. \, Let $G_1, \dots, G_r$ be almost simple factors of $G$. Any maximal torus $R$ of $G$ is an almost direct product of maximal tori $R_i$ of $G_i$, and the Weyl group $\mathcal W(G,R)$ is the direct product of $\mathcal W_i := \mathcal W(G_i, R_i)$.

Suppose first that  $G_i$ acts trivially on $v$, for some $i$. Then any state $\lambda$ of $v$ satisfies $\lambda(R_i) = 1$, so that the state polytope $\mathcal P_v(R) $ is contained in a proper hyperplane. Hence the generic state polytope has empty interior, and $v$ is generically strictly semistable.

Conversely, suppose that no $G_i$ is contained in $G_v$.  We claim that each $G_i$ contains a maximal torus that is not contained in $G_v$: Let $H$ be the subgroup of $G_i$ generated by all maximal tori of $G_i$. If $G_v$ contains all maximal tori, then $G_v$ contains $H$. But $H$ is normal and connected so $H = G_i$ since $G_i$ is almost simple. Moreover, since $G_v$ is a closed subgroup, it follows that a general maximal torus of $G_i$ acts non-trivially on $v$.

Fix a maximal torus $R \subset G$ whose simple factors $R_i \subset G_i$ act non-trivially on $v$.  Let $\mathfrak g_i = \mathfrak r_i \oplus \bigoplus_{\alpha \in\Phi_i} \mathfrak g_{i, \alpha}$ be the root space decomposition. Now, by the first part of the theorem, there exists a nonempty open subscheme $U \subset G$ such that for any $g \in U$, the state polytope of $g.v$ with respect to $R$ contains the origin. We have observed that a general maximal torus of each simple factor $G_i$ acts non-trivially on $v$. By Equation~(\ref{E:basic-reln}), we may assume that each $R_i$ acts nontrivially on $g.v$ since $g$ is chosen generally. Hence there exists a state $\lambda_i$ of $g.v$ that is non-trivial on $\mathfrak r_i$. Since $G_i$ is almost simple, its root system is indecomposable and the Weyl orbit $\mathcal W_i.\lambda_i$ spans $\mathfrak r_i^*$. Said otherwise, the convex hull of $\mathcal W_i.\lambda_i$ in $\mathfrak r_i^*$ contains the origin in its interior, for any $i$.  Hence the state polytope of $g.v$ with respect to $R$ contains the origin in its interior.

\end{proof}

\section{Generic stability of Hilbert points}
In this section, we take a closer look at the case of Hilbert points, our main objects of interest.
Let $\hilb^P \bP^n$ denote the Hilbert scheme of closed subschemes of Hilbert polynomial $P$ of $\bP^n$  and let $S = k[x_0,\dots,x_n]$ be the homogeneous coordinate ring of $\bP^n$.
For $m\ge m_0$, the Gotzmann number of $P$, we have the Grothendieck embedding
\[
\hilb^P \bP^n \inj \Gr(\ell, S_m) \inj \bP\left(\bigwedge^\ell S_m\right)
\]
where $S_m$ is the degree $m$ piece of the graded algebra $S$ and $\ell = \binom{n+m}m - P(m)$. The embedding sends a saturated ideal $I \subset S$ of Hilbert polynomial $P$ to $I_m \subset S_m$ which is an $\ell$ dimensional linear subspace. Let $V := \bigwedge^\ell S_m$.


We have a natural linear action of $\SL_{n+1}(k)$ on $V$ which preserves $\hilb^P\bP^n$. Let $T$ be the maximal torus of $\SL_{n+1}(k)$ diagonalized by $x_0,\dots,x_n$. The weight spaces of $V$ are simply $k.x^\a := k.(x^{\a_1}\wedge \cdots \wedge x^{\a_\ell})$ where $x^{\a_i} = \prod_{j=0}^n x^{\a_{ij}}$: Any $t \in T$ acts on $x^\a$ via the character
\[
\chi_\a(t) = \prod_{i=1}^\ell \prod_{j=0}^n t_j^{\a_{ij}} = \prod_{j=0}^n t_j^{\sum_{i=1}^\ell \a_{ij}}.
\]
Different $\a$s may very well give rise to the same character. In fact, $\chi_\a$ is trivial if and only if $\sum_{i=1}^\ell \a_{ij}$ is independent on $j$. That is, the columns of the $\ell \times (n+1)$ matrix $(\a_{ij})$ have the same column sum. Hence the non-triviality of the trivial weight space is equivalent to the existence of a non-negative integral $\ell\times (n+1)$ matrix $(\a_{ij})$ such that
\begin{enumerate}
\item the rows $\a_1, \dots, \a_\ell$ are all mutually distinct;
\item the row sums $\sum_{j=0}^n \a_{ij}$ equal $m$;
\item the column sums $\sum_{i=1}^\ell \a_{ij}$ are the same.
\end{enumerate}
There are obvious necessary conditions.  If $(\a_{ij})$ satisfied the column/row sum conditions, summing up all entries of $(\a_{ij})$, we would obtain
\[
\ell m = (n+1) c
\]
where $c$ denotes the column sum. This implies that $(n+1) | \ell$. Also, for the rows to be distinct,
$\ell$ should not exceed $\binom{n+m}m$, the number of all degree $m$ monomials in $x_0, \dots, x_n$. These conditions turn out to be sufficient as well, but we do not present  a proof here.
Back to the generic stability problem: We first examine the hypersurface case. Let $G = \SL_{n+1}(k)$ act on the space $H^0(\bP^n, \cO_{\bP^n}(d))$ of degree $d$ homogeneous polynomials in $x_0, \dots, x_n$. Let $T$ be the maximal torus of $G$ diagonalized by $x_0, \dots, x_n$.  The weight spaces are precisely the one-dimensional spaces $k.x^\a$ where $x^\a$ are the degree $d$ monomials, and we will conflate monomials and their associated characters. Note that when $n = 1$ and $d$ is odd, the trivial weight space is zero. Now, consider the GIT of $\bP H^0(\cO_{\bP^n}(d))$.

\begin{prop} For any $[f] \in \bP H^0(\cO_{\bP^n}(d))$, the generic state is
\[
\gst_{[f]}(T) = \left\{ x^\a \, \left| \, \forall \a \in \bZ^{n+1}_{\ge 0}, \sum_{i=0}^n \a_i = d\right\}\right.. \
\]
\end{prop}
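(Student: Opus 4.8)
The plan is to identify the generic state via the distinguished set $S^{[f]}$ of Proposition~\ref{P:gstv} and to show that it is empty. First I would record that for fixed degree $d$ the characters attached to distinct exponent vectors $\a\in\bZ_{\ge0}^{n+1}$ with $\sum_i\a_i=d$ are pairwise distinct: if $\chi_\a=\chi_\b$ then $\a-\b$ is an integral multiple of $(1,\dots,1)$, which forces $(n+1)\mid\bigl(|\a|-|\b|\bigr)=0$, hence $\a=\b$. Consequently every weight space $V_{\chi_\a}$ is the line $k\cdot x^\a$, so $P(V)$ is canonically the set of all degree $d$ monomials, and the asserted formula is exactly the statement $S^{[f]}=\emptyset$. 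By the description of $S^v$ in the proof of Proposition~\ref{P:gstv}, $S^{[f]}=\{\chi_\a\in P(V)\mid (g.f)_{x^\a}=0\ \text{for all}\ g\in G\}$, where $G=\SL_{n+1}(k)$ and $(g.f)_{x^\a}$ denotes the coefficient of $x^\a$ in $g.f$. So it suffices to prove: for every degree $d$ monomial $x^\a$ there is at least one $g\in G$ with $(g.f)_{x^\a}\ne0$. Writing $L_\a\colon S_d\to k$ for the linear functional ``coefficient of $x^\a$'', and $S_d$ for $H^0(\bP^n,\cO_{\bP^n}(d))$, this says that the orbit $G.f$ is not contained in the hyperplane $\ker L_\a$; since $\ker L_\a$ is a linear subspace, it is equivalent to say that the linear span $W:=\langle g.f\mid g\in G\rangle$ is not contained in $\ker L_\a$.

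The crux is therefore that $W=S_d$. Now $W$ is a nonzero $G$-subrepresentation of $S_d=\Sym^d(V^*)$, and in characteristic zero $\Sym^d$ of the standard representation of $\SL_{n+1}(k)$ is irreducible; hence $W=S_d$, and since $L_\a\ne0$ the orbit $G.f$ is not contained in $\ker L_\a$. (If one prefers to avoid this input, the same conclusion can be reached by hand: one first produces a single $g_0$ with $(g_0.f)_{x_0^d}\ne0$ — the coefficient of $x_0^d$ in $g.f$ equals $f$ evaluated at the image of a coordinate vector under $g$, which is nonzero for a suitable $g$ because $f\not\equiv0$ — and then, after the unipotent coordinate change sending $x_0$ to $x_0+\sum_{i\ge1}t_ix_i$ and fixing the other variables, one checks that for each $\a$ the coefficient of $x^\a$ in the transformed form is a polynomial in $t$ in which the monomial $t_1^{\a_1}\cdots t_n^{\a_n}$ appears with nonzero coefficient coming solely from the $x_0^d$-term of $g_0.f$; so that polynomial is not identically zero.) In either case, for each of the finitely many $\a$ the locus $\{g\in G\mid (g.f)_{x^\a}=0\}$ is a proper closed subset of the irreducible variety $G$; the complement of their union is then a nonempty, hence dense, open subset $U\subset G$ on which $\Xi_{g.f}(T)=P(V)$. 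Since $U$ meets the dense open stratum $U^{[f]}_{S^{[f]}}$, choosing $g$ in the intersection gives $\gst_{[f]}(T)=\Xi_{g.f}(T)=P(V)$, i.e. $S^{[f]}=\emptyset$, which is the claim.

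I do not anticipate a serious obstacle here. The only genuinely delicate points are the clean identification of $P(V)$ with the monomial set (the distinctness of the $\chi_\a$ above) and the fact that computing the generic state is unaffected by replacing $f$ with a generic translate — both handled above — together with the one standard representation-theoretic input, irreducibility of $\Sym^d$ of the standard representation (equivalently, that $d$-th powers of linear forms span $S_d$), which is what makes the orbit span the whole space. Everything else is a routine openness-plus-density argument on $G$.
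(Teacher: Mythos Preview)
Your argument is correct, and it takes a genuinely different route from the paper's. The paper argues combinatorially: expanding $g.x^\a$ one sees that each coefficient $c_\b^\a(g)$ is a polynomial in the entries $g_{ij}$ with nonnegative integer coefficients, and crucially it is multi-homogeneous of multi-degree $(\a_0,\dots,\a_n)$ in the column-variables $(g_{0j},\dots,g_{nj})$. Hence in the sum $\sum_\a d_\a c_\b^\a(g)$ giving the coefficient of $x^\b$ in $g.f$, the contributions from distinct $\a$ live in distinct multi-degrees and cannot cancel; the coefficient is therefore a nonzero polynomial in the $g_{ij}$ and vanishes only on a proper closed subset. By contrast, you replace this explicit cancellation-free computation by the single representation-theoretic input that $\Sym^d$ of the standard $\SL_{n+1}$-representation is irreducible in characteristic zero, which forces the orbit of $f$ to span $S_d$ and hence not to lie in any coordinate hyperplane $\ker L_\a$. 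Your approach is shorter and more conceptual, and your preliminary observation that distinct degree-$d$ exponent vectors yield distinct $T$-characters (so that $P(V)$ really is the monomial set) is a point the paper asserts without justification. The paper's approach, on the other hand, is entirely elementary and avoids any appeal to irreducibility; your alternative ``by hand'' argument via evaluation at a point followed by a unipotent substitution is closer in spirit to the paper's, though still organized differently.
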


\begin{proof} Let $g=(g_{ij})\in G$. Then $g.x_i=\displaystyle{\sum_{j=0}^{n}}g_{ji}x_j$. Thus for a degree $d$ monomial $x^{\alpha}=x_0^{\alpha_0}\ldots x_n^{\alpha_n}$, we have
$$g.x^{\alpha}=\displaystyle{\sum_{\deg(\beta)=d}} c_{\beta}^{\alpha}(g)x^{\beta}.$$
Substituting  $g.x_i=\displaystyle{\sum_{j=0}^{n}}g_{ji}x_j$ and expanding, we see that $c_{\beta}^{\alpha}(g)$ is a nonzero polynomial in $\mathbb{Z}[g_{ij}]$ with positive integer coefficients, and that it is a multi-homogeneous polynomial of multi-degree $(\a_0, \dots, \a_n)$ in the $(n+1)$ sets of variables $$(g_{00},g_{10},\dots,g_{n0}; g_{01}, \dots, g_{n1}; \cdots ; g_{0n},\dots,g_{nn}).$$
Now let $f=\displaystyle{\sum_{\deg(\alpha)=d}}d_{\alpha}x^{\alpha}$ be a homogeneous polynomial of degree $d$, and consider
\[
g.f=\displaystyle{\sum_{\deg(\beta)=d}}\left(\displaystyle{\sum_{\alpha}}d_{\alpha}c_{\beta}^{\alpha}(g)\right)x^{\beta}.
\]
No cancellation occurs in $\displaystyle{\sum_{\alpha}}d_{\alpha}c_{\beta}^{\alpha}(g)$ since the coefficients $c^{\a}_\b(g)$ have different multi-degrees. Therefore, the state of $g.f \in  H^0(\cO_{\bP^n}(d))$ contains all weights of $H^0(\cO_{\bP^n}(d))$ but especially (the characters associated to) $x_0^d, \dots, x_n^d$, whose convex hull clearly contains the trivial character.
\end{proof}

\begin{prop} \label{P:gen-stab-hilb}
Let $I \subset k[x_0,\dots,x_n]$ be a homogeneous ideal. The $m$th Hilbert point $[I]_m$ is generically stable with respect to the natural $\SL_{n+1}(k)$-action,  for any $m \ge \textup{reg}(I)$.
\end{prop}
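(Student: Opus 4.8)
The plan is to reduce the statement to Theorem~\ref{T:generic-stability}(2) and then to invoke one classical fact: that the degree-$m$ piece $S_m$ is an irreducible representation of $G:=\SL_{n+1}(k)$ in characteristic zero. Write $\ell=\dim_k I_m$ and $V=\bigwedge^\ell S_m$, so that the $m$th Hilbert point $[I]_m\in\bP(V)$ is the line spanned by $\bigwedge^\ell I_m$. This is a genuine point of $\Gr(\ell,S_m)\subset\bP(V)$ exactly when $0\subsetneq I_m\subsetneq S_m$, and for $m\ge\textup{reg}(I)$ with $I$ neither the zero nor the unit ideal this holds automatically: $I$ is then $m$-regular, so $\dim_k S_m/I_m=P(m)$ with $0<P(m)<\binom{n+m}{m}$. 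This is the only place the regularity hypothesis enters.

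First I would pin down the isotropy subgroup. For $g\in G$ one has $g\cdot\bigl(\bigwedge^\ell I_m\bigr)=\bigwedge^\ell(g\cdot I_m)$, which is a scalar multiple of $\bigwedge^\ell I_m$ if and only if $g\cdot I_m=I_m$; hence $G_{[I]_m}$ is precisely the stabilizer in $G$ of the subspace $I_m\subset S_m$ (no scalar ambiguity arises, consistent with $G$ being perfect). In particular $G\subseteq G_{[I]_m}$ if and only if $I_m$ is a $G$-submodule of $S_m$. Now $S_m=\Sym^m S_1$ is an irreducible $G$-module over the characteristic-zero field $k$ (the $m$th symmetric power of the $(n+1)$-dimensional defining representation of $\SL_{n+1}(k)$, or of its dual), so its only $G$-submodules are $0$ and $S_m$. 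Since $0\subsetneq I_m\subsetneq S_m$, the subspace $I_m$ is not $G$-stable, and therefore $G\not\subseteq G_{[I]_m}$.

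Finally, $G=\SL_{n+1}(k)$ is almost simple (for $n\ge1$), so it is its own unique almost simple factor, and we have just verified that this factor is not contained in the isotropy subgroup of $[I]_m$. By Theorem~\ref{T:generic-stability}(2), $[I]_m$ is generically stable, as claimed. The argument is short and essentially formal, so I do not expect a serious obstacle; the only care needed is in disposing of the degenerate cases $I=0$ and $I_m=S_m$ and in citing the Gotzmann/Macaulay regularity bound that guarantees $0<\dim_k I_m<\dim_k S_m$. A more hands-on proof, mimicking the hypersurface proposition above by tracking the Pl\"ucker coordinates of $g\cdot I_m$ for generic $g$, is possible but considerably messier, which is why routing through Theorem~\ref{T:generic-stability} is the natural choice.
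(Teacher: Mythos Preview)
Your argument is correct. The paper itself remarks, just before its own proof, that the proposition ``of course follows from the main Theorem~\ref{T:generic-stability}'' and then deliberately gives an alternative, hands-on argument ``devoid of any Lie algebra terminologies.'' So you have taken precisely the route the paper acknowledges but does not write out, and you supply the one detail the paper leaves implicit: that $S_m$ is an irreducible $\SL_{n+1}(k)$-module in characteristic zero, which forces $I_m$ not to be $G$-stable and hence $G\not\subseteq G_v$.

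The paper's explicit proof is different in spirit. It fixes a basis $f_1,\dots,f_\ell$ of $I_m$, expands the Pl\"ucker coordinates $c_\alpha(g)$ of $g.(f_1\wedge\cdots\wedge f_\ell)$, and uses the fact that the Weyl group $S_{n+1}\subset\SL_{n+1}(k)$ merely permutes the variables to conclude that if $U_\alpha=\{g:c_\alpha(g)\ne0\}$ is nonempty then so is $U_{s.\alpha}$ for every $s\in S_{n+1}$. For $g$ in the dense open set $\bigcap_s U_{s.\alpha}$ the state of $g.(f_1\wedge\cdots\wedge f_\ell)$ contains a full $S_{n+1}$-orbit, and since the $A_n$ root system is irreducible this forces the origin into the interior of the state polytope. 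Your route is cleaner and conceptually clearer; the paper's route has the virtue of being self-contained and of making the Weyl-orbit mechanism explicit at the level of Pl\"ucker coordinates.

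One small comment: Theorem~\ref{T:generic-stability} is stated for $G_v$ (the stabilizer of the vector), not $G_{[v]}$. Your computation actually gives $G_{[I]_m}=\mathrm{Stab}_G(I_m)$, the stabilizer of the line; but since $G_v\subseteq G_{[v]}$, showing $G\not\subseteq G_{[I]_m}$ is the stronger statement and immediately yields what you need. The parenthetical about $G$ being perfect is a red herring here and can be dropped.
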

This of course follows from the main Theorem~\ref{T:generic-stability} applied to the $\SL_{n+1}(k)$ action on $\bigwedge^{\dim I_m}k[x_0,\dots,x_n]_m$, but here we present a proof devoid of any Lie algebra terminologies.
\begin{proof}
Let $f_1,\ldots,f_l$ be a $k$-basis of $[I]_m$ and let $g=(g_{ij})\in \SL_{n+1}$. Then
$$g.(f_1\wedge \cdots \wedge f_l)=g.f_1\wedge \cdots \wedge g.f_l=
\sum c_{\alpha}(g)x^{\alpha (1)}\wedge\cdots\wedge x^{\alpha (l)}$$
where $c_{\alpha}(g)$ is a polynomial in $\mathbb{Z} [g_{ij}]$ and  $x^{\alpha (i)}=x_0^{\alpha (i)_0}\cdots x_n^{\alpha (i)_n}$ are degree $m$ monomials.

Note that the Weyl group of $\SL_{n+1}(k)$ is isomorphic to the symmetric group $S_{n+1}$ on $n+1$ letters.
For  $s\in S_{n+1}$ (here we identify $S_{n+1}$ with  a natural copy of it in $\SL_{n+1}$),
the action of $s$ on $f_1\wedge \cdots \wedge f_l$ just permutes  $x_0,\ldots,x_n$ and the corresponding coefficients are unchanged (invariance of coefficients).

Define an $S_{n+1}$ action on $\alpha (i)$ and $\alpha$ by
$$s.x^{\alpha (i)}=x_{s(0)}^{\alpha(i)_0}\ldots x_{s(n)}^{\alpha(i)_n}=x_0^{(s.\alpha(i))_0}\ldots x_n^{(s.\alpha(i))_n}$$
and
$$s.(x^{\alpha(1)}\wedge \ldots \wedge x^{\alpha(l)})=x^{s.\alpha(1)}\wedge \ldots \wedge x^{s.\alpha(l)}.$$
Then if we let $U_{\alpha}=\{g=(g_{ij}) \, \vert \, c_{\alpha}(g)\ne 0\}$ and choose any $\alpha$ with $U_{\alpha}\ne \phi$, each $U_{s.\alpha}$ is also nonempty open precisely due to the invariance of coefficients. Since $\GL_{n+1}$ is irreducible, $U=\bigcap_{s\in S_{n+1}}U_{s.\alpha}$ is nonempty open. Then for any $g\in U$, $g.(f_1\wedge \ldots \wedge f_l)$ is clearly stable.
\end{proof}

\begin{remark} In the analysis above, Hilbert points are generically stable since the stability is with respect to an action of $\SL_{n+1}(k)$ which is almost simple: $\SL_{n+1}(k)/\mathbb{\mu}_{n+1} = \PSL_{n+1}(k)$ has no nontrivial normal subgroup.

\end{remark}

\end{document}